\newtheorem{Th}{Theorem}
\newtheorem{lem}{Lemma}
\newtheorem{prop}{Proposition}
\newcommand{\confrac}[2]{
\frac{\displaystyle{
\strut\hfill{#1}\hfill\;\vrule}}
{\displaystyle{
 \strut\vrule\;\hfill{#2}\hfill}}}
\title{On the Lenstra constant associated to the Rosen continued fractions }
\author{by \\ Hitoshi Nakada \thanks{supported by Bezoekersbeurs 
\textbf{B 61-620} of
the Nederlandse Organisatie voor Wetenschappelijk Onderzoek (NWO) 
and Grant-in Aid for 
Scientific research \textbf{18340032} 
of Japan Society for the Promotion of Science }
\\ {\small Department of Mathematics, Keio 
University} \\ {\small Hiyoshi, Kohoku-ku, Yokohama 223-8522, JAPAN} \\
{\small nakada@math.keio.ac.jp}}
\date{20 May 2007}
\begin{document}
\maketitle
\begin{abstract}
The purpose of this paper is to describe the relation between the Legendre and 
the Lenstra constants.  Indeed we show that they are equal whenever the 
Legendre constant exists; in particular, this holds for both 
Rosen continued fractions and $\alpha$-continued fractions.  
We also give the explicit value of the entropy of the Rosen map 
with respect to the absolutely continuous invariant probability measure. \\
{\bf Subject Classification}: 11K50, 37A45
\end{abstract}
\section{Introduction}
Let $x$ be an irrational number in $[0,\,1]$.  We denote by 
$\frac{p_n}{q_n}$ the n-th 
principal convergent of $x$ and recall the following.\\
\\
{\bf Theorem} (Legendre) {\it 
Suppose $p$ and $q$ ($>0$) are relatively prime integers and 
$| x \, - \, \frac{p}{q}|\,  < \, \frac{1}{2q^2}$. Then $\frac{p}{q}$ 
is a principal convergent to $x$. On the other hand, 
for any $c>\frac12$, there exist $x$ and $\frac{p}{q}$, which is not a 
principal convergent, such that 
$| x \, - \, \frac{p}{q}|\,  < \, c \, \frac{1}{q^2}$.}
 
In this sense, we call $\frac12$ the Legendre constant of the regular 
continued fractions.  
Now we consider the error of the principal convergents.  We put 
\[
    \Theta_{n} \, = \, q_{n}^2 \, |x \, - \, \frac{p_n}{q_n}| .
\]
The following fact was proved by \cite{Bo-J-W}.\\
\\
{\bf Theorem} (Bosma, Jager and Wiedijk, 1983) {\it For a.e. $x$, 
\[
 \lim_{n \to \infty} \, \frac1n \sum_{j=1}^{\infty} \, \{1 \le j \le n : 
 \Theta_{j} \le t\}
\]
exists for any $t$, $0 \le t \le 1$ and the limit is equal to
\[
     F(t) \, = \, \left\{ \begin{array}{lcr}  \frac{t}{\ln \, 2} & , 
     & 0\le t \le \frac12 \\
     {} & {} & \\
                                        \frac{1}{\ln \, 2}
               \,(1 \, - \, t \, + \, \ln\,2t) & , & \frac12 \le t \le 1
           \end{array} \right. .
\]}\\

We notice that $F(t)$ is linear in $0 \le t \le \frac12$ and not in 
$\frac12 \le t \le 1$.  
In this sense, we call $\frac{1}{2}$ the Lenstra constant of regular continued 
fractions because this fact was conjectured by 
H.~W.~Lenstra in 1981.  We can define Legendre constants and Lenstra constants 
for other types of continued fraction expansions in a similar manner  
(e.g. $\alpha$-expansions, \cite{Na}, \cite{Bo-J-W}).  In general, it is not 
hard to show that the Lenstra constant exists and is at least as large than 
the Legendre constant for each type of continued fraction expansion whenever 
the Legendre constant exists.  However a number of examples indicate that 
these constants seem to be equal to each other.       

Motivation of this paper is the metrical theory of Rosen continued fractions 
associated to Hecke groups.  It is possible to define the Legendre constants 
in this case even though the convergents of Rosen continued 
fractions are not rational numbers.   Indeed, in  1985 J.~Lehner claimed that 
the Legendre constant of Rosen continued fractions is greater than or equal 
to $\frac{1}{2}$ for Hecke group of any indices (\cite{Le2}). 
However, the proof was not correct (see \cite{Le3}, actually the correct value 
is less than $\frac12$) and a lower estimate was given by 
\cite{R-S} (where constants depend on the indices of Hecke groups).  On the 
other hand, the Lenstra constant was given by \cite{B-K-S} (and \cite{Na2} 
for even indices case).  
Also \cite{Na2} claimed (without proof) that the Lenstra constant and 
the Legendre constant are the same for each Hecke group of even index.
Here we note that Corollary 4.1. of \cite{B-K-S} 
did not say that the constant is the best possible one (which means it is 
the Lenstra constant), it is not hard to see that it is the best possible. 
We refer \cite{K-N-S} on this point. Indeed, the Lenstra constant for 
Rosen continued fraction is 
\[
\left\{ \begin{array}{cl}
\frac{\lambda}{\lambda + 2}, & q \,\, \text{even} \\
\frac{R}{R + 1}, & q \,\, \text{odd}
\end{array} \right.
\]

In the sequel, we show that the Lenstra constant is equal to the Legendre 
constant.  
In the next section, we introduce a generalized Diophantine approximation 
problem associated to a zonal Fuchsian group and give a law of large numbers 
for solutions 
of the Diophantine inequality.  This assures the existence of the Lenstra 
constant under the existence of the Legendre constant and also implies that 
the Lenstra constant is at least as large than Legendre constant.
In \S 3, we prove the equality of these two constants, mainly showing that 
the Lenstra constant can not be larger than the Legendre constant.  As a 
corollary (of the proof), we get the explicit value of the entropy of 
the Rosen map.  Finally 
we note that the same result holds for $\alpha$-continued fractions, 
$0 < \alpha \le \frac{1}{2}$.  For $\frac{1}{2} \le \alpha \le 1$, 
this result was shown by C.~Kraaikamp \cite{Kr} by a different way.  Recently, 
R.~Natsui \cite{Nt} showed that the existence of the Legendre constant for 
any $0 < \alpha \le \frac{1}{2}$ and thus we can apply the method of 
this paper to show the equality of these two constants. 
We stress the difference between concepts of two constants.  The Legendre 
constant is determined by the property which holds for all $x$, 
without exceptional point, on the other hand, the Lenstra constant comes 
from the metrical property which only holds for almost all points. 

\section{Generalized Diophantine Approximation}
Let $\Gamma$ be a finitely generated Fuchsian group acting on 
the upper half complex plane ${\bf H}^2$, $L$ the set of limit points 
of $\Gamma$ and $P$ the set of parabolic points. We assume that 
$\infty \in P$.  

An element $g \in \Gamma $ can be viewed as a $2\times 2$ real matrix
\[
\begin{pmatrix}
        a& b \\
        c& d 
  \end{pmatrix} 
\]
of determinant $1$.  We write $a=a(g)$, $b=b(g)$, $c(g)$ and $d=d(g)$.

J.~Lehner \cite{Le1} proved that there exists a positive number $t$ 
depending on $\Gamma$ such that
\[ 
\sharp \{\ g(\infty) : |x - g(\infty)| < \frac{t}{c^2 (g)}, \, 
     g \in \Gamma \} \, = \, \infty   
\]
for any $x \in L\setminus P$.  He also proved that if $\Gamma$
is of the first kind $(L = {\bf R})$, then for any sequence 
$\{ \, \varepsilon_n\} $ 
of positive numbers and a.e. $x \in L \setminus P $, 
there exists a sequence $\{\, g_n \} $ in $\Gamma$ such that 
\[
 |x - g_n(\infty)| \, < \, \frac{\varepsilon_n}{c^2 (g_n)}   
\]
Moreover, Patterson \cite{Pa} proved a kind of Khintchine theorem when $\Gamma$
is of the first kind: for example, his result implies that
\[ 
\sharp \{ \, g(\infty) : |x - g(\infty)| < 
            \frac{\ln |c(g)|}{c^2(g)},
                     \,  g \in \Gamma \} \, = \, \infty   
\]
for a.e. $x \in {\bf R} \setminus P $.

We shall estimate the asymptotic number of solutions of
\[
  g(\infty) : |x - g(\infty)| \, < \, \frac{t}{c^2 (g)}, 
     \, g \in \Gamma   
\]
for some positive real number $t$ and a.e. $ x \in {\bf R} \setminus P $.  
To do this, we consider a relation among 
the Diophantine inequality, geodesics of ${\bf H}^2 $, and 
geodesics of ${\bf H}^2 / \Gamma $.  We show that the ergodicity 
of the geodesic flow on ${\bf H}^2 / \Gamma $ with the 
hyperbolic measure is closely related to the quantitative theory 
of the Diophantine approximation on $\Gamma$, (see \cite{Su} for the 
qualitative theory).  The relation 
between the Diophantine inequality and geodesics of ${\bf H}^2$ 
also have been considered by A.~Haas \cite{Ha} and A.~Haas and C.~Series 
\cite{Ha-Ser} to 
discuss the Lagrange spectrum of the approximation on $\Gamma$. 
The {\it ``height"} of the $\Gamma$-congruent family of geodesics 
plays an important role in their discussion. 

This idea is also applicable to the theory of Diophantine 
approximations for complex numbers, where we have to consider ${\bf H}^3 $ 
\cite{Na3}.

Since $\infty \in P $, there exists 
\[ 
U_{\lambda} \, = \, 
       \begin{pmatrix}
        1& \lambda \\
        0& 1 
       \end{pmatrix}
  \in \Gamma , \,  \lambda \in {\bf R}_+  
\] 
such that
\[ 
    \{ {U_{\lambda}}^k : k \in {\bf Z} \} \, = \, \Gamma_{\infty} 
\]
where $\Gamma_\infty$ denotes the subgroup of $\Gamma $ that fixes $\infty$. 
We define the fundamental region $\cal F$ of $\Gamma $ by
\[  
{\cal F} = \{ z = x + i y : \frac{-\lambda}{2} < x
                  <   \frac{\lambda}{2}, \ y>0 \} \, 
     \bigcap \,  \left(\cap_{g \in \Gamma \setminus \Gamma_\infty}
\{ z : |c(g) \cdot z + d(g)| > 1 \}\right). 
\] 
${\mathcal F}$ is a hyperbolic polygon and its each side is an arc of 
the isometric circle of an element $g \in \Gamma$.  The image of this side 
by $g$ is also a side of ${\mathcal F}$, which is an arc of the 
isometric circle of $g^{-1}$.  We identify 
all such pairs and obtain  a hyperbolic surface.
It is well-known that the hyperbolic metric 
     $ds = \frac{\sqrt{{dx}^2 + {dy}^2}}{y}$
and the hyperbolic measure $d\mu = \frac{dx \, dy}{y^2}$ on ${\bf H}^2$
are invariant under $\Gamma$-action over ${\bf H}^2$. 

\begin{Th} 
Let 
\[ 
t_0 = \frac{1}{2} \, \min_
        {\scriptstyle g \in \Gamma \setminus \Gamma_\infty}
     |c(g)|,   
\]
then we have 
\[  \lim_{\scriptstyle N \to \infty}  
\frac{\sharp \{ g(\infty) : |x - g(\infty) < \frac{t}{{c^2}(g)}, 
      \  |c(g)| \le N, \ g \in \Gamma \} }{\ln N}
             =  \frac{4 \lambda \cdot  t } { \pi \cdot \mu ({\cal F})}
\]
for any $t$, $0<t< t_0$,  (a.e. $x \in {\bf R} \setminus P$).
\end{Th}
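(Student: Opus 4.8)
The plan is to translate the Diophantine inequality $|x - g(\infty)| < t/c^2(g)$ into a statement about how high a certain $\Gamma$-congruent geodesic rises in the fundamental region ${\cal F}$, and then to count such geodesics using the ergodicity of the geodesic flow on ${\bf H}^2/\Gamma$. First I would fix $x \in {\bf R}\setminus P$ and, for each $\xi \in {\bf R}$, consider the geodesic $\gamma_{x,\xi}$ in ${\bf H}^2$ with endpoints $x$ and $\xi$. The key elementary computation is this: for $g \in \Gamma$ with $g(\infty) = a(g)/c(g)$, the geodesic from $x$ to $g(\infty)$ is taken by $g^{-1}$ to a geodesic from $g^{-1}(x)$ to $\infty$ (a vertical line), and tracking the Euclidean geometry one finds that $|x - g(\infty)| < t/c^2(g)$ holds if and only if the image geodesic $g^{-1}(\gamma)$ — equivalently, some $\Gamma$-translate of the relevant geodesic — enters the region $\{y > 1/(2t)\}$ (a horoball at $\infty$ of the appropriate size), or more precisely reaches Euclidean height exceeding a threshold comparable to $1/(2t)$ inside ${\cal F}$. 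The condition $|c(g)| \le N$ then controls the ``time'' parameter along the geodesic, since successive large excursions correspond to the return times of the geodesic flow, and the logarithm $\ln N$ arises because $c(g)$ grows roughly geometrically in the combinatorial length while the hyperbolic length is linear.

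Second I would set up the counting. Lift the geodesic from $x$ to a base point (say straight up to $\infty$, i.e. the geodesic $\gamma_x$ from $x$ to $\infty$), and follow its projection to the unit tangent bundle $T^1({\bf H}^2/\Gamma)$. The solutions $g(\infty)$ of the inequality with $|c(g)| \le N$ are in bijection (up to the $\Gamma_\infty$-ambiguity, which contributes the factor involving $\lambda$) with the excursions of this projected geodesic into the cusp neighborhood $\{y > 1/(2t)\}/\Gamma_\infty$ up to hyperbolic time $\asymp 2\ln N$. By the ergodicity of the geodesic flow with respect to the normalized hyperbolic (Liouville) measure on $T^1({\bf H}^2/\Gamma)$ — which has total mass proportional to $\mu({\cal F})$ — the number of such excursions up to time $T$ is asymptotically $T$ times the measure of the corresponding cross-section, by a Birkhoff ergodic theorem / equidistribution argument applied to the return map to the boundary horocycle $\{y = 1/(2t)\}$. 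Computing that cross-sectional measure: the horocycle piece has hyperbolic length $\lambda \cdot 2t$ (width $\lambda$ at height $1/(2t)$), the inward-pointing directions contribute a factor, and dividing by $\mu({\cal F})$ and by the normalization $\pi$ coming from the measure of the unit tangent circle gives exactly $\dfrac{4\lambda \cdot t}{\pi \cdot \mu({\cal F})}$. The restriction $t < t_0 = \tfrac12 \min|c(g)|$ guarantees that the horoball $\{y > 1/(2t)\}$ is precisely embedded (no two $\Gamma$-translates overlap), so that each excursion corresponds to exactly one group element and the bijection is clean.

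The main obstacle I expect is making the correspondence between ``number of solutions $g(\infty)$ with $|c(g)| \le N$'' and ``number of cusp excursions up to hyperbolic time $\asymp 2\ln N$'' genuinely exact rather than merely asymptotic, i.e. controlling the error terms. There are two sources of slack: (i) the relation between $|c(g)|$ and the hyperbolic length of the geodesic segment — one must show $2\ln|c(g)|$ equals the relevant time up to a bounded (in fact, uniformly bounded) error, which comes from the geometry of the isometric circles and the bounded diameter of ${\cal F}$ transverse to the cusp; and (ii) the ``last'' excursion may be incomplete when we truncate at height $N$, but this contributes only $O(1)$ to the count and is washed out after dividing by $\ln N$. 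A secondary technical point is justifying the a.e.\ statement: the ergodic theorem gives convergence of the time-average for a.e.\ initial direction, and one must check that the specific direction ``straight up from $x$'' corresponds to a full-measure set of $x \in {\bf R}\setminus P$ under the identification of the boundary with a cross-section of the flow — this is where the hypothesis that $\Gamma$ is of the first kind (so $L = {\bf R}$) and the absolute continuity of the boundary measure class enter. Once these are in place, the constant falls out of the explicit volume computation described above.
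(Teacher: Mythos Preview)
Your plan is correct and is essentially the paper's own argument, differing only in vocabulary: what you call the horoball $\{y>1/(2t)\}$ and its $\Gamma$-images are precisely the paper's Ford circles $F_t(g(\infty))$ (the circle tangent to ${\bf R}$ at $a(g)/c(g)$ of radius $t/c^2(g)$, with $F_t(\infty)=\{y=1/(2t)\}$), and your ``cusp excursions'' are the paper's ``crossings of $F_t(\infty)$ from below'' in the quotient. The paper organizes the geometric translation into three short lemmas (invariance of the Ford-circle family, the equivalence of the Diophantine inequality with the vertical geodesic $\gamma(\infty,x)$ meeting $F_t(g(\infty))$, and the disjointness/embedding for $t<t_0$), then states the ergodic-theorem step as a proposition giving the crossing rate $\mu\{y>1/(2t)\}/(\pi\,\mu({\cal F}))$, upgrades it to hold for all $t<t_0$ simultaneously, and finishes with the hyperbolic-length bookkeeping $s\sim 2\ln N$ that you also anticipated; your identification of the two error sources (the $O(1)$ discrepancy between $2\ln|c(g)|$ and the flow time, and the incomplete final excursion) and of the a.e.\ issue matches exactly what the paper handles.
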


The proof of this theorem is basically the same as that of the main result 
in \cite{Na3} for the imaginary quadratic field case with the hyperbolic 
upper half space.  So we only give a sketch of the proof here.
We start with some lemmas.

We denote by $\gamma (x , y)$ the geodesic curve with the 
initial point $x$ and the terminal point $y$ for 
$(x , \beta ) \in ({\bf R} \cup \{ \infty \})^2 \setminus \{ \mbox{diagonal} 
\}$. 
We also denote by 
\[  
F_t(g(\infty)), \, t>0, 
\]
the circle which is tangent to the real line at $\frac{a(g)}{c(g)}$ with
radius $\frac{t}{c^{2}(g)}$ for $ g \notin \Gamma_\infty$ and \\
$ \{ x + iy : y = \frac{1}{2t} \} $ \ for $g \in \Gamma_\infty $.

It is possible to show the following:
\begin{lem}
If we fix $t>0$, then 
\[  
h(F_t (g(\infty))) = F_t (hg(\infty)) 
\]
for any h and g $\in \Gamma$.
\end{lem}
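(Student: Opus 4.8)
The plan is to reduce the lemma to the single identity
\[
F_t\bigl(g(\infty)\bigr) \, = \, g\bigl(F_t(\infty)\bigr)
\]
for every $g \in \Gamma$, where $F_t(\infty)$ denotes the horizontal line $\{x + iy : y = \frac{1}{2t}\}$ (the value of $F_t$ at the fixed point of $\Gamma_\infty$). Granting this identity, the lemma follows in one line: for $h, g \in \Gamma$ we have $hg \in \Gamma$, and hence
\[
h\bigl(F_t(g(\infty))\bigr) \, = \, h\bigl(g(F_t(\infty))\bigr) \, = \, (hg)\bigl(F_t(\infty)\bigr) \, = \, F_t\bigl((hg)(\infty)\bigr).
\]

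To establish the identity I would distinguish the two cases in the definition of $F_t$. If $g \in \Gamma_\infty$, then $g = U_\lambda^k$ is a horizontal translation; it fixes the line $\{y = \frac{1}{2t}\}$ setwise and $g(\infty) = \infty$, so both sides equal $F_t(\infty)$. If $g \notin \Gamma_\infty$, then $c(g) \neq 0$ and $g(\infty) = a(g)/c(g) \in {\bf R}$. Since $g$ is a M\"obius transformation sending $\infty$ to this finite point, $g(F_t(\infty))$ is a Euclidean circle tangent to ${\bf R}$ at $a(g)/c(g)$, so only its radius remains to be identified. Using $\mathrm{Im}\,g(z) = \mathrm{Im}(z)/|c(g)z + d(g)|^2$ (valid because $\det g = 1$), the Euclidean diameter of $g(F_t(\infty))$ equals $\sup_{z} \mathrm{Im}\,g(z)$ over the points $z = x + \frac{i}{2t}$ of the line; the quantity $(c(g)x + d(g))^2 + c^2(g)/(4t^2)$ is minimized at $x = -d(g)/c(g)$, which yields diameter $2t/c^2(g)$ and hence radius $t/c^2(g)$, exactly matching the definition of $F_t(g(\infty))$.

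I do not expect a genuine obstacle here; the only points needing care are that $F_t(\cdot)$ really is a function of the boundary point $g(\infty)$ and not of the matrix $g$ itself --- which holds because $g_1(\infty) = g_2(\infty)$ for $g_1, g_2 \in \Gamma$ forces $g_2^{-1}g_1 \in \Gamma_\infty$ and therefore $c(g_1) = c(g_2)$ --- and that the two cases in the definition of $F_t$ are mutually consistent, which they are since $g(\infty) = \infty$ holds precisely when $g \in \Gamma_\infty$. Thus the reduction identity and the concluding display above remain correct regardless of which of $g$ and $hg$ happen to lie in $\Gamma_\infty$.
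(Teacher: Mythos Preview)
Your proof is correct and follows essentially the same approach as the paper. The paper's proof is the one-line assertion that $F_t(g(\infty))$ is the $g$-image of the horocycle $\{y = \tfrac{1}{2t}\}$, hence the family is $\Gamma$-invariant; you have simply made this explicit by writing down the reduction $F_t(g(\infty)) = g(F_t(\infty))$, verifying it case by case, and checking well-definedness.
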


{\bf Proof.} \ This follows from the fact that $F_t(g(\infty))$ 
is an image of 
$ \{ x + iy : y = \frac{1}{2t} \} $, which makes an invariant family 
of circles under $\Gamma$-action. \hfill  \qed
\begin{lem}
For any $k>0$,
\[  
|x - g(\infty)| < \frac{t}{{c^2}(g)} 
\]
holds if and only if 
$\gamma (\infty, x)$ and $\cap F_t (g(\infty))$ 
do not cross to each other.
\end{lem}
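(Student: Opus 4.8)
The plan is to read off both objects in coordinates and then reduce to the model horocycle at $\infty$ by applying $g^{-1}$. Since $\gamma(\infty,x)$ is the vertical half-line $\{x+iy : y>0\}$ and, for $g\notin\Gamma_\infty$, $F_t(g(\infty))$ is the Euclidean circle tangent to $\mathbf{R}$ at $a(g)/c(g)=g(\infty)$ of radius $t/c^2(g)$, the incidence of these two curves is a purely Euclidean question; the only subtle point is to pin down which side of the borderline $|x-g(\infty)|=t/c^2(g)$ corresponds to the configuration named in the statement.

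First I would apply the isometry $g^{-1}\in\Gamma$. By Lemma 1, $g^{-1}(F_t(g(\infty)))=F_t(\infty)=\{x+iy : y=\tfrac{1}{2t}\}$, while $\gamma(\infty,x)$ is carried to the geodesic $\gamma(g^{-1}(\infty),g^{-1}(x))$. Because $g^{-1}$ is a hyperbolic isometry it preserves incidence, so the assertion for the pair $(\gamma(\infty,x),F_t(g(\infty)))$ is equivalent to the same assertion for the pair consisting of the geodesic $\gamma(g^{-1}(\infty),g^{-1}(x))$ and the horizontal line $\{y=\tfrac{1}{2t}\}$, i.e.\ a geodesic semicircle versus a horizontal line. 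This is the model situation in which everything is transparent.

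Next I would compute the two real endpoints of the transported geodesic. With $g=\begin{pmatrix} a & b \\ c & d\end{pmatrix}$ one has $g^{-1}(\infty)=-d/c$ and $g^{-1}(x)=(dx-b)/(a-cx)$, and using $ad-bc=1$ the Euclidean distance between them collapses to
\[
\big|\,g^{-1}(x)-g^{-1}(\infty)\,\big|=\frac{1}{c^2\,|x-g(\infty)|}.
\]
The semicircular geodesic with real endpoints $u_1,u_2$ attains maximal height $\tfrac12|u_1-u_2|$, so comparing its apex with the height $\tfrac{1}{2t}$ of the horizontal line decides the incidence, and substituting the displayed identity converts that comparison of $\tfrac12|g^{-1}(x)-g^{-1}(\infty)|$ with $\tfrac{1}{2t}$ into a comparison of $|x-g(\infty)|$ with $t/c^2(g)$, which is exactly the Diophantine quantity in the statement.

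The main point to get right --- and the only genuine obstacle --- is the bookkeeping of the direction of this comparison: one must verify that the side of the dividing value $t/c^2(g)$ singled out by $|x-g(\infty)|<t/c^2(g)$ is precisely the configuration asserted, namely that $\gamma(\infty,x)$ and $F_t(g(\infty))$ do not cross to each other. The borderline case $|x-g(\infty)|=t/c^2(g)$ (tangency of the vertical line to the circle, equivalently the apex of the transported geodesic lying exactly on $\{y=\tfrac{1}{2t}\}$) should be excluded, and the degenerate case $g\in\Gamma_\infty$, where $F_t(g(\infty))$ is itself the horizontal line, should be recorded separately and is immediate. Once the direction is fixed, transporting the conclusion back by $g$ and invoking the isometry-invariance of incidence furnished by Lemma 1 completes the equivalence.
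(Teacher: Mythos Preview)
The paper offers no proof of this lemma; it is stated without argument and then invoked. So there is nothing on the paper's side to compare your method against.

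Your conjugation-by-$g^{-1}$ approach is sound, and the identity $|g^{-1}(x)-g^{-1}(\infty)|=1/(c^{2}(g)\,|x-g(\infty)|)$ is correct. It is, however, more machinery than the problem needs: $\gamma(\infty,x)$ is the vertical line $\{x+iy:y>0\}$, and $F_t(g(\infty))$ is the Euclidean circle with center $(g(\infty),\,r)$ and radius $r=t/c^{2}(g)$, so the line meets the circle if and only if the horizontal distance $|x-g(\infty)|$ is less than $r$. That is a one-line Euclidean observation, and it already settles the equivalence---no passage to the model horocycle is required.

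The real issue is the ``direction bookkeeping'' you explicitly defer. If you actually carry it out you will find that the printed wording is inverted: $|x-g(\infty)|<t/c^{2}(g)$ is equivalent to $\gamma(\infty,x)$ \emph{crossing} $F_t(g(\infty))$, not to their failing to cross. In your own model picture, the apex height $\tfrac12|g^{-1}(x)-g^{-1}(\infty)|$ exceeds $\tfrac{1}{2t}$ exactly when $|x-g(\infty)|<t/c^{2}(g)$, and ``exceeds'' means the semicircle \emph{does} meet the horizontal line. The way the lemma is used in the proof of Theorem~1---matching Diophantine solutions with times at which the geodesic crosses a circle $F_t(g(\infty))$ from outside---confirms that ``do not cross'' (and the stray ``$\cap$'') is a misprint. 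So your plan would succeed in proving the intended statement, but would refute the statement as literally printed; do not try to force the computation to yield the ``do not cross'' direction.
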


If $t < t_0$, then we see that $\{ F_t(g(\infty)) \}$ is 
a disjoint family of circles, that is, 
\[ 
  F_t (g(\infty)) \cap F_t (h(\infty)) = \emptyset 
  \]
if $g(\infty) \neq h(\infty)$.  Thus we have the following:

\begin{lem} 
If $0 < t < t_0$, then every point of $F_t(g(\infty)) 
\setminus ({\bf R} \cup \{ \infty \} )$ is congruent to some point of 
$F_t(\infty) \cap ({\it F} \setminus \{ \infty \} )$.  In particular,  
if $ p \in  F_t(g(\infty)) \setminus ({\bf R} \cup \{ \infty \}) $, 
then there exists $ h \in \Gamma $ such that $ h(p) = x + iy, 
\frac{- \lambda}{2} < x < \frac{\lambda}{2}$ and 
$ y = \frac{1}{2t} $.  
\end{lem}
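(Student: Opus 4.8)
The plan is to exploit Lemma 1 together with the fact that, under $0<t<t_0$, the family $\{F_t(g(\infty)):g(\infty)\in\Gamma(\infty)\}$ is pairwise disjoint, so that each $F_t(g(\infty))$ is tangent to $\mathbf{R}$ at a parabolic point equivalent to $\infty$ and meets no other member of the family. First I would fix a point $p\in F_t(g(\infty))\setminus(\mathbf{R}\cup\{\infty\})$. Since $\frac{a(g)}{c(g)}=g(\infty)$ is a parabolic point $\Gamma$-congruent to $\infty$, there is $g_0\in\Gamma$ with $g_0(\infty)=g(\infty)$; by Lemma 1, $g_0^{-1}$ carries $F_t(g(\infty))=F_t(g_0(\infty))$ onto $F_t(\infty)=\{z:\operatorname{Im} z=\tfrac{1}{2t}\}$. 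Hence $g_0^{-1}(p)$ lies on the horizontal line $\{y=\tfrac1{2t}\}$, and it remains only to translate its real part into the strip $(-\tfrac\lambda2,\tfrac\lambda2)$.

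The second step is exactly that normalization: write $g_0^{-1}(p)=x'+\tfrac{i}{2t}$ with $x'\in\mathbf{R}$, choose $k\in\mathbf{Z}$ so that $x'+k\lambda\in(-\tfrac\lambda2,\tfrac\lambda2]$ (adjusting the endpoint if necessary so as to land strictly inside, which is harmless because $p\notin\mathbf R$ forces $x'$ to avoid the single bad value up to the $U_\lambda$-orbit), and set $h=U_\lambda^{\,k}g_0^{-1}\in\Gamma$. Since $U_\lambda$ fixes $\{y=\tfrac1{2t}\}=F_t(\infty)$ setwise (indeed $U_\lambda(\infty)=\infty$, so Lemma 1 gives $U_\lambda(F_t(\infty))=F_t(\infty)$), the point $h(p)=x'+k\lambda+\tfrac{i}{2t}$ still lies on that line and now has real part in $(-\tfrac\lambda2,\tfrac\lambda2)$; this $h$ witnesses the claim. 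The assertion that the resulting point lies in $\mathcal F$ up to the boundary is then immediate from the definition of $\mathcal F$, once one checks that $y=\tfrac1{2t}$ is large enough to sit above all isometric circles — which follows from $t<t_0=\tfrac12\min_{g\in\Gamma\setminus\Gamma_\infty}|c(g)|$, since every isometric circle $|c(g)z+d(g)|=1$ with $g\notin\Gamma_\infty$ has Euclidean diameter $\tfrac{1}{|c(g)|}\le\tfrac{1}{2t_0}<\tfrac1{2t}$, so no such circle can reach the height $\tfrac1{2t}$.

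The only genuinely delicate point, and the one I would spell out most carefully, is the disjointness input: one must be sure that the $g_0$ above can be chosen so that $g_0^{-1}$ sends the \emph{entire} circle $F_t(g(\infty))$ to $F_t(\infty)$ and not merely maps the tangency point to $\infty$; this is where Lemma 1 is used in full strength, because the family $\{F_t(\cdot)\}$ being $\Gamma$-invariant as a family of \emph{circles} (not just of tangency points) is exactly what guarantees that the image of a circle of the family is again a circle of the family with the prescribed tangency. Everything else is the routine horocycle/strip bookkeeping sketched above.
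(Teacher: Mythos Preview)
Your argument is correct and matches the paper's implicit approach: the paper gives no detailed proof of this lemma, only the sentence ``Thus we have the following'' after observing disjointness, so you have supplied exactly the routine horocycle bookkeeping the author left to the reader (map back to $F_t(\infty)$ via Lemma~1, translate into the strip by a power of $U_\lambda$, and use $t<t_0$ to place the line $y=\tfrac{1}{2t}$ above every isometric circle).

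One minor point of emphasis: your final paragraph singles out disjointness as the ``only genuinely delicate point,'' but in fact your first two paragraphs never use it. Lemma~1 already guarantees that $g_0^{-1}$ sends the \emph{entire circle} $F_t(g_0(\infty))$ to $F_t(\infty)$, independently of whether the family is disjoint; and the height bound $\tfrac{1}{2t}>\tfrac{1}{|c(g)|}$ for all $g\notin\Gamma_\infty$ follows directly from $t<t_0$, again without invoking disjointness. Disjointness is a \emph{consequence} of the same height condition (distinct horoballs projecting into an embedded cusp cannot overlap) rather than an input to this lemma, and its real role in the paper is in the counting argument that follows.
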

\underline {\bf Proof of the theorem.} 
Let ${\bf T}({\bf H}^2)$ and {\bf T}(${\cal F}$) be the unit tangent 
bundles of ${\bf H}^2$ and ${\cal F}$, respectively.  We consider the 
geodesic flows $f_s$ and ${\hat f}_s$ on 
${\bf T}({\bf H}^2)$ and {\bf T}(${\cal F}$), respectively. 
For ${\omega}^{\ast} \in {\bf T}({\bf H}^2)$, there is a unique 
geodesic $(x , \beta )$  passing tangentially through ${\omega}^{\ast}$.
If $x \neq \infty$ and $\beta \neq \infty$, then we denote by 
$s$ the (signed) hyperbolic length from the top of the geodesic arc
$(x , \beta )$ to $\omega$, which is the base point of 
${\omega}^{\ast}$.  If $x = \infty$ (or $\beta = \infty $), then 
we denote by $s$ the hyperbolic length from the point $\beta + i$ 
or ($x + i$) to $\omega$, (respectively).  Thus we can 
parameterize ${\omega}^{\ast} \in {\bf T}({\bf H}^2)$ by 
$(x,\beta,s)\in(({\bf R}\cup\{\infty\})^2 \setminus \{diagonal\})
\times{\bf R}$.
So if $0<t<t_0$, we see from Lemmas 2 and 3 that
\begin{eqnarray*}
\left| \, \sharp \{ g(\infty) : |x - g(\infty)| < \frac{t}{{c^2}(g)},
                   \ |c(g)|  \le N, \ g \in \Gamma \} \right. \, -  \\
  \left. 
  \sharp \left\{ s :  \begin{array}{l}
            f_s (\infty , x , -\ln (4t_0 + 1))
           \ \mbox{crosses a circle} \ F_t (g(\infty)) \ \mbox{from}  \\
             \mbox{outside at time} \ s, 
          \ 0<s \le \ln(4 t_0 +1) - \ln t + 2 \ln N 
        \end{array} \right\}   \, \right| 
        \,  \le \, 1 
\end{eqnarray*}
and 
\begin{eqnarray*} 
{} & {} & \sharp \left\{ s :  \begin{array}{l}
            f_s (\infty , x , -\ln (4t_0 + 1))
           \ \mbox{crosses a circle} \ F_t (g(\infty)) \ \mbox{from}  \\
             \mbox{outside at time} \ s, 
          \ 0<s \le \ln(4 t_0 +1) - \ln t + 2 \ln N 
        \end{array} \right\} \\ 
{} & = & \sharp \left\{ s : \begin{array}{l}
       {\hat f}_s({\omega}^*) \ \mbox{crosses} 
       \, F_t(\infty)
                     \ \mbox{from below at time} \ s, \\
          0 < s \le \ln (4 t_0 + 1) - \ln t + 2 \ln N 
        \end{array} \right\} 
\end{eqnarray*}
where $\omega^{\ast} \in {\bf T}({\cal F})$ is the point corresponding to 
$(\infty , x , -\ln (4t_0 + 1) ) \in {\bf T}({\bf H}^2)$.

Now we apply the individual ergodic theorem for 
$({\bf T}({\cal F}), \hat f , \hat {\mu} )$ to our problem.  Here, the 
hyperbolic measure $\hat {\mu}$ on ${\bf T}({\cal F})$ induced from  
$\mu$ is defined by
\[
     \hat{\mu} = \frac{dx\ d\beta\ ds}{{(x - \beta)}^2}
\]
if we parameterize a point in ${\bf T}({\cal F})$ by 
$(x , \beta , s)$.

\begin{prop} 
If we fix $t$, $0<t<t_0$, then
\[
\lim_{u \to \infty} \frac{ \sharp \{ s : 
f_s({\omega}^{\ast}) \, \mbox{crosses} \, F_t(\infty) \ \mbox{from below}, 
\ 0<s<u \} }{u}
                 =
\frac{\mu \{ x+iy \in {\cal F} : y>\frac1{2t}\}}{\pi \cdot \mu({\cal F})}
\]
for a.e. $\omega^{\ast} \in {\bf T}({\cal F}).$ 
\end{prop}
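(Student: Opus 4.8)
The plan is to realise ``crossing $F_t(\infty)$ from below'' as ``meeting a fixed cross--section of the geodesic flow $\hat f_s$'' and then to read off the limit from the individual ergodic theorem via the suspension (special--flow) representation of $\hat f_s$ over that section. First I would use $0<t<t_0$ to put the cusp region in standard form: every isometric circle $|c(g)z+d(g)|=1$ with $g\in\Gamma\setminus\Gamma_\infty$ has Euclidean radius $1/|c(g)|\le 1/(2t_0)<1/(2t)$ and therefore does not meet the line $\{y=1/(2t)\}$, so
\[
\{\,x+iy\in{\cal F}:\ y>\tfrac1{2t}\,\}=\{\,x+iy:\ -\tfrac\lambda2<x<\tfrac\lambda2,\ y>\tfrac1{2t}\,\}=:{\cal H}.
\]
By the disjointness of the circles $F_t(g(\infty))$ from Lemma 3, the horoball $\{y>1/(2t)\}$ projects injectively onto the embedded cusp neighbourhood ${\cal H}\subset{\bf H}^2/\Gamma$, and a direct computation gives $\mu({\cal H})=\int_{-\lambda/2}^{\lambda/2}\int_{1/(2t)}^{\infty}y^{-2}\,dy\,dx=2\lambda t$.

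Next I would introduce $\Sigma\subset{\bf T}({\cal F})$, the set of unit tangent vectors based on the closed horocycle $F_t(\infty)=\partial{\cal H}$ and pointing into ${\cal H}$ (positive vertical component). Since ${\cal H}$ is embedded, for a.e.\ $\omega^{\ast}$ the orbit of $\omega^{\ast}$ meets $F_t(\infty)$ only transversally, and ``$\hat f_s(\omega^{\ast})$ crosses $F_t(\infty)$ from below at time $s$'' is precisely ``$\hat f_s(\omega^{\ast})\in\Sigma$''; so it suffices to compute the limit of $\frac1u\,\sharp\{\,0<s<u:\ \hat f_s(\omega^{\ast})\in\Sigma\,\}$. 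Here I would invoke the ergodicity of $({\bf T}({\cal F}),\hat f_s,\hat\mu)$, which is classical since $\Gamma$ is finitely generated of the first kind, hence a lattice (this is also the ergodic input used in \cite{Na3}). A collar of $\Sigma$ has positive $\hat\mu$-measure, so a.e.\ orbit meets $\Sigma$ recurrently, and, modulo a $\hat\mu$-null set, $\hat f$ is the suspension flow of the first--return system $(\Sigma,T,R)$, where $T:\Sigma\to\Sigma$ is the first--return map and $R:\Sigma\to(0,\infty)$ the return time.

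Let $\tilde\nu$ be the transverse measure on $\Sigma$ determined locally by $d\hat\mu=d\tilde\nu\,ds$. Kac's formula gives $\int_\Sigma R\,d\tilde\nu=\hat\mu({\bf T}({\cal F}))<\infty$, so $R\in L^1(\tilde\nu)$ and $(\Sigma,T,\tilde\nu/\tilde\nu(\Sigma))$ is an ergodic probability--preserving transformation. Birkhoff's theorem applied to $R$ shows that the $n$-th return time to $\Sigma$ of a.e.\ orbit is asymptotic to $n\,\hat\mu({\bf T}({\cal F}))/\tilde\nu(\Sigma)$, and inverting this gives $\frac1u\,\sharp\{0<s<u:\hat f_s(\omega^{\ast})\in\Sigma\}\to\tilde\nu(\Sigma)/\hat\mu({\bf T}({\cal F}))$ for a.e.\ $\omega^{\ast}$. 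It remains to evaluate these two quantities. A short computation (the measure $dx\,d\beta\,ds/(x-\beta)^2$ is one half of the Liouville measure $y^{-2}\,dx\,dy\,d\theta$, whose total mass over ${\cal F}$ is $2\pi\,\mu({\cal F})$) yields $\hat\mu({\bf T}({\cal F}))=\pi\,\mu({\cal F})$. For $\tilde\nu(\Sigma)$ I would use the standard decomposition of $\hat\mu$ along a transversal carried by a curve: writing $\ell$ for hyperbolic arclength along $F_t(\infty)$ (total length $2\lambda t$) and $\psi\in(0,\pi)$ for the angle between the geodesic direction and the horocycle, $d\tilde\nu=\tfrac12\sin\psi\,d\ell\,d\psi$, so $\tilde\nu(\Sigma)=\tfrac12\big(\int_0^{\pi}\sin\psi\,d\psi\big)\cdot 2\lambda t=2\lambda t=\mu({\cal H})$. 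Substituting, the limit equals $\mu({\cal H})/(\pi\,\mu({\cal F}))=\mu\{x+iy\in{\cal F}:y>\tfrac1{2t}\}/(\pi\,\mu({\cal F}))$, as asserted.

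The step I expect to be the main obstacle is the passage from the flow to its cross--section: one must verify that $\Sigma$ is a genuine global section met by a.e.\ orbit, that $R$ is a.e.\ finite and $\tilde\nu$-integrable — this is exactly where finiteness of $\hat\mu({\bf T}({\cal F}))$, i.e.\ $\Gamma$ being a lattice, enters, through Kac's formula — and that $\tilde\nu$ is normalised consistently with $\hat\mu$. Note in particular that $R$ need not be bounded below when $t$ is close to $t_0$ (consecutive entries into ${\cal H}$ can be separated by arbitrarily short excursions), so a naive ``collar count'' of the crossings has to be replaced by the suspension argument above. The remaining ingredients — the geometric normalisation forced by $t<t_0$, the ergodicity of the geodesic flow, and the Birkhoff step — are routine, and the argument is parallel to the corresponding one in \cite{Na3}.
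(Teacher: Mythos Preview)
Your argument is correct and is, in spirit, exactly what the paper intends: the paper itself gives no proof of this proposition beyond the single sentence ``Now we apply the individual ergodic theorem for $({\bf T}({\cal F}),\hat f,\hat\mu)$ to our problem'' and the blanket reference to \cite{Na3}. What you have written is a careful unpacking of that sentence --- realising the crossing count as the visit count to a global cross--section $\Sigma$, representing $\hat f$ as a suspension over $(\Sigma,T,R)$, invoking Kac's formula to get $R\in L^1$, and then reading the limit off Birkhoff's theorem. This is the standard way (and the way of \cite{Na3}) to turn ``individual ergodic theorem'' into an honest statement about crossing frequencies, so your route and the paper's are the same; you simply supply the details the paper omits. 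Your normalisation checks ($\mu({\cal H})=2\lambda t$, $\hat\mu({\bf T}({\cal F}))=\pi\,\mu({\cal F})$, $\tilde\nu(\Sigma)=2\lambda t$) are consistent with the paper's choice $\hat\mu=dx\,d\beta\,ds/(x-\beta)^2$, and your remark that $R$ is not bounded below, so that a suspension/Kac argument is genuinely needed rather than a naive collar count, is a point the paper glosses over.
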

Moreover, by using an approximation method, on $t$, we have
\begin{prop}
For a.e. $\omega^{\ast} \in {\bf T}({\cal F}),$ 
\[
\lim_{u \to \infty} \frac{ \sharp \{ s : 
f_s({\omega}^{\ast}) \, \mbox{crosses} \, F_t(\infty)\ \mbox{from below}, 
\ 0<s<u \} }{u}
                 =
\frac{\mu \{ x+iy \in {\cal F} : y>\frac1{2t}\}}{\pi \cdot \mu({\cal F})}
\]
for any $t$, $0<t<t_0.$
\end{prop}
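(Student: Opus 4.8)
The plan is to upgrade Proposition 1, where the exceptional null set depends on $t$, to Proposition 2, where a single null set works for all $t$, by the classical ``monotone quantity, continuous limit, countable dense set'' device — the same argument one uses to pass from pointwise to almost‑sure/uniform convergence of empirical distribution functions. First I would fix notation: for $\omega^{\ast}\in{\bf T}({\cal F})$, $u>0$ and $0<t<t_0$ write $N_t(\omega^{\ast},u)$ for the crossing count appearing in the numerator, and $\phi(t)=\mu\{x+iy\in{\cal F}:y>1/(2t)\}/(\pi\cdot\mu({\cal F}))$ for the asserted limit. Pick a countable dense set $D\subset(0,t_0)$. By Proposition 1 there is, for each $t\in D$, a conull set $E_t\subset{\bf T}({\cal F})$ on which $N_t(\cdot,u)/u\to\phi(t)$; set $E=\bigcap_{t\in D}E_t$, which is still conull. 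The goal is then to show that the limit holds for every $\omega^{\ast}\in E$ and every $t\in(0,t_0)$, using only the validity of the limit along $D$ together with soft properties of $N_t$ and $\phi$.

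The two ingredients I would establish are: (i) $N_t(\omega^{\ast},u)$ is non‑decreasing in $t$ for each fixed $\omega^{\ast}$ and $u$; and (ii) $\phi$ is continuous on $(0,t_0)$. For (i) the key geometric point is that when $t<t_0$ the horizontal line $F_t(\infty)=\{y=1/(2t)\}$ lies strictly above the isometric circle of every $g\in\Gamma\setminus\Gamma_\infty$, since such a circle has Euclidean radius $1/|c(g)|\le 1/(2t_0)<1/(2t)$; hence in the region $\{y>1/(2t_0)\}$ the surface ${\cal F}$ is just the vertical strip $\{|x|<\lambda/2\}$, so every excursion of a geodesic into that region, once lifted to ${\bf H}^2$, is an arc of a single Euclidean semicircle (or vertical line) and therefore has a unimodal height profile. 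Consequently $N_t(\omega^{\ast},u)$ is exactly the number of such excursions whose top exceeds $1/(2t)$ (with up‑crossing time in $(0,u)$), and lowering the level — that is, increasing $t$ — can only add such excursions, never remove one. For (ii) a one‑line computation gives $\phi(t)=2\lambda t/(\pi\mu({\cal F}))$ on $(0,t_0)$, in agreement, up to the time‑normalisation, with Theorem 1, and in particular continuous.

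With (i) and (ii) in hand I would finish by sandwiching: for $\omega^{\ast}\in E$ and $t\in(0,t_0)$, choose $t',t''\in D$ with $t'<t<t''$; monotonicity gives $N_{t'}(\omega^{\ast},u)\le N_t(\omega^{\ast},u)\le N_{t''}(\omega^{\ast},u)$ for all $u$, hence
\[
\phi(t')\ \le\ \liminf_{u\to\infty}\frac{N_t(\omega^{\ast},u)}{u}\ \le\ \limsup_{u\to\infty}\frac{N_t(\omega^{\ast},u)}{u}\ \le\ \phi(t'') ;
\]
letting $t'\uparrow t$ and $t''\downarrow t$ through $D$ and using the continuity of $\phi$ squeezes both outer terms to $\phi(t)$, which is the conclusion.

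The step I expect to require the most care is (i). Monotonicity of level up‑crossing counts of an arbitrary orbit is false in general, so one genuinely has to use that all the relevant levels $1/(2t)$ sit inside the cuspidal region where excursions are unimodal, and then check that the bookkeeping of which up‑crossings fall in the window $(0,u)$ behaves correctly when the level is lowered (lowering the level moves an up‑crossing earlier along the orbit, never later). Once this is pinned down, the remainder is routine real analysis, and I would also note in passing that the same argument gives the $\alpha$‑continued fraction analogue once the corresponding version of Proposition 1 is available.
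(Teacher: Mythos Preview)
Your proposal is correct and is precisely the ``approximation method on $t$'' that the paper invokes in one line without further detail; you have supplied the standard monotonicity-plus-continuity sandwich that the phrase is meant to abbreviate. The only point worth noting is that the monotonicity $N_{t'}\le N_t\le N_{t''}$ can fail by at most one at each end of the time window $(0,u)$ (an excursion already in progress at time $0$, or cut off at time $u$), but this boundary term is $O(1)$ and disappears after dividing by $u$, so the sandwich still closes.
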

Furthermore, it is possible to show that if $\omega^{\ast} =(x , \beta , s) 
\in {\bf T}({\cal F})$ has the above property, then for any  $x '
\in {\bf R} \cup \{\infty\}$ and $s' \in {\bf R}$, \ $\omega^{\ast \ast} 
=(x' , \beta , s')$ 
also has the same property.  Since the hyperbolic length between 
$x + (4t_0 + 1)i $ and $x + \frac1{N}i$ is equal to 
$\ln N + \ln (4t_0 + 1)$, we have
\begin{eqnarray*}
& \quad & \lim _{N \to \infty}
\frac{\sharp \{ g(\infty) : |x - g(\infty)| < \frac{t}{{c^2}(g)},
                \ |c(g)|  \le N, \ g \in \Gamma \}}{\ln N} \\ 
& = &  2 \cdot \frac{\mu \{ x+iy \in {\cal F} : y>\frac1{2t}\}}
                    {\pi \cdot \mu({\cal F})}  \\
& = & \frac{4 \lambda \cdot t}{\pi \cdot \mu({\cal F})} 
\end{eqnarray*}
for any $t$, $0<t<t_0$ and a.e. 
$x \in {\bf R} \setminus  P$.
\qed
\\[0.5cm]
\underline{\bf Some remarks.} 
We apply the theorem to Hecke groups of index $k$, $G_k$,
$ 3 \le k \le \infty$, and its congruent subgroups $G_k(m)$.
   Here $G_k$ is the group generated by
\[
    \begin{pmatrix}
        0& 1 \\
       -1& 0 
    \end{pmatrix}
         \quad      \text{and}    \quad
    \begin{pmatrix}
        1& \lambda_k \\
        0& 1 
    \end{pmatrix} 
\]
where $\lambda_k = 2 \cdot \cos \frac{\pi}{k}$ for $k \geq 3$ 
(and $ = 2$ when $k= \infty$), 
and $G_k(m)$ the subgroup of $G_k$ defined by 
\[
 G_k(m) = \left\{g \in G_k : g \equiv 
\begin{pmatrix} 
    \pm 1 & 0 \\ 
    0 & \pm 1 
 \end{pmatrix}
\,  \bmod \,(m \cdot \lambda_k)\right\}
\]
where $(m \cdot \lambda_k)$ denotes the ideal generated by 
$m \cdot \lambda_k$ with a positive integer $m$.  

A fundamental region ${\cal F}_k$ of $G_k$ is given by 
\[
{\cal F}_k = \{ x+iy : 
            -\cos {\pi}{n}< x \le \cos {\pi}{k}, \ x^2+y^2>1, \, y>0\}.
\]
Thus we see that $G_k$ is of the first kind and 
\[
   P = P_k = G_k(\infty) = \{ g(\infty) : g \in G_k \}
\]
if $k \ne \infty$.  In this case, we have 
\[
\lim_{N \to \infty}
\frac{\sharp \{ g(\infty) : |x - g(\infty)| < \frac{t}{{c^2}(g)},
                    \ |c(g)|  \le N, \ g \in G_k \}}{\ln N}  
 =  \frac{4 \cdot k \cdot \lambda_k \cdot t}{(k-2) \cdot \pi^2 } 
\]
for any $t$, $0<t< \frac{1}{2}$ and a.e. $x \in {\bf R}
\setminus P_k$.  
We can also apply Theorem 1 to $G_k(m)$.  Then, in this case, the set of 
parabolic points of $G_k$ is 
divided into a finite number of disjoint sets.  We put 
\[
t_m = \frac{1}{2} 
\min_{g \in G_k(m) \setminus G_k(m)_\infty}  |c(g)| . 
\]
There exists a constant $C > 0$ such that  
\[
\lim_{\scriptstyle N \to \infty}
\frac{\sharp \{ g(\infty) : |x - g(\infty)| < \frac{t}{{c^2}(g)},
                    \ |c(g)|  \le N, \ g \in G_k(m)  \}}{\ln N}  
 = C \cdot t
\]
for any $t$, $0 < t < t_m$, and a.e. $x \in {\bf R}$. Since $G_k(m)$ 
is a subgroup of $G_k$, for each cusp of the fundamental region of 
$G_k(m)$ there exists $g_{\eta} \in G_k$ such that $g_{\eta}(\eta) = 
\infty$. It is obvious that $g_{\eta}{\mathcal F}$ is a fundamental 
region of $g_{\eta}G_k(m) g_{\eta}^{-1}$. Since $G_k(m)$ is normal, 
$g_{\eta} {\mathcal F}$ is a fundamental region of 
$G_k(m)$ .  This means the ``width" of the cusp $\eta$ is the same as 
that of $\infty$.  Thus we have 
\[
\lim_{\scriptstyle N \to \infty}
\frac{\sharp \{ g(\infty) : |x - g(\infty)| < \frac{t}{{c^2}(g)},
                    \ |c(g)|  \le N, \ g \in G_k, \,\, 
                    \exists \hat{g} \in G_k(m) \, \text{s.t.} \, 
                    g(\infty) = \hat{g}(\eta) 
                      \}}{\ln N}  
 = C \cdot t
\]
for any $t$, $0 < t < t_m$, and a.e. $x \in {\bf R}$.  Moreover, it turns 
out that $t_m \to \infty$ as $m \to \infty$.  This shows the following : 
\\
{\bf Corollary} {\it For a.e. $x \in {\bf R}$ 
\[
\lim_{\scriptstyle N \to \infty}
\frac{\sharp \{ g(\infty) : |x - g(\infty)| < \frac{t}{{c^2}(g)},
                    \ |c(g)|  \le N, \, g \in G_k \}}{\ln N}  
 =  \frac{4 \cdot k \cdot \lambda_k \cdot t}{(k-2) \cdot \pi^2 } 
\]
for any $t>0$.} \\[0.3cm]
\underline{\bf Remark.}  The above proof (of this corollary) shows the 
equidistributed property (a.e.) of solutions associated to cusps. 
We refer R.~Moeckel \cite{Mo} for the original idea of this method.

\section{Rosen Continued Fractions }
Given any element of $G_k$ of the form 
\[
\begin{pmatrix}
p & \cdot \\ q & \cdot 
\end{pmatrix} ,
\]
we have $g(\infty) = p/q$ and, moreover for any $\tilde g \in
G_k$ with $\tilde g (\infty) = p/q$ we have 
\[
  \tilde g \, = \, \begin{pmatrix} p & \cdot \\ q & \cdot \end{pmatrix}
\qquad {\mbox or} \qquad \begin{pmatrix} -p & \cdot \\ -q & \cdot \end{pmatrix}
\]
So, for any parabolic point of $G_k$, $p$ and $q>0$ are uniquely determined.

We define the $\lambda_k$-nearest continued fraction transformation of 
$[-\frac{\lambda_k}{2}, \, \frac{\lambda_k}{2})$ onto itself by 
\[
           S(x)) \, = \left\{ \begin{array}{ccl} 
           \left| \frac{1}{x} \right| \, - \, 
           \left[\, \left| \frac{1}{x} \right| \, \right]_k  
           & \mbox {for} & x \ne 0 \\ 
           {} & {} & {} \\
           0 & \mbox{for} & x = 0 
\end{array} \right.
\]
where $[w]_k \, = \, b \cdot \lambda_k$ ; $b \in \bf{Z}$, when 
$w \in [b-\frac{\lambda_k}{2}, \, b+\frac{\lambda_k}{2})$.  
We put 
\[
\varepsilon_n \, = \varepsilon_n (x) \, = \, \mbox{sgn} S^{n-1}(x)
\]
and 
\[
a_n \, = \, a_n(x)\,= \, [\, \left|\frac{1}{S^{n-1}(x)} \right| \, ]_k
\]
for any $n \ge 1$ and have a continued fraction expansion:
\[
x \, = \,   \confrac{\varepsilon_1}{a_1} \, + \, 
            \confrac{\varepsilon_2}{a_2} \, + \, 
            \confrac{\varepsilon_3}{a_3} \, + \,  \cdots
\]
We call this expansion the Rosen continued fraction expansion of $x$. 
In general, if a continued fraction, either finite or infinite, is 
given by some $x$ as its Rosen continued fraction expansion, we call it 
a Rosen continued fraction.
We define the principal convergent $\frac{p_n}{q_n}, \, n \ge 0$, by 
\[
\begin{pmatrix}
p_{-1} & p_0 \\
q_{-1} & q_0 
\end{pmatrix} 
\, = \, 
\begin{pmatrix}
1  &  0 \\
0  &  1 
\end{pmatrix} 
\]
and 
\[
\begin{pmatrix}
p_{n-1} & p_n \\
q_{n-1} & q_n
\end{pmatrix} 
\, = \, 
\begin{pmatrix}
0   & \varepsilon_1 \\
1   & a_1
\end{pmatrix} 
\begin{pmatrix}
0   & \varepsilon_2 \\
1   & a_2
\end{pmatrix} 
\cdots
\begin{pmatrix}
0   & \varepsilon_n \\
1   & a_n
\end{pmatrix} 
\qquad \mbox{for} \, n \ge 0 . 
\]
It is easy to see that $q_n > 0$ for any $n \ge 0$. 
\begin{lem} We have
\[\begin{aligned}
\dfrac{1}{q_n \left(q_{n+1} + q_n\right)} \, &\le \, \left| x \, - \, \frac{p_n}{{q_n}} \right|  \\
\\
& \le \,
\left\{ \begin{array}{cl}
\dfrac{1}{q_n^2\left( 1 - \dfrac{\lambda}{2}\right)} & \mbox{if $k$ is even}; 
\\
\\
\dfrac{1}{q_n^2 \left( \dfrac{1}{R} - \dfrac{\lambda}{2}\right)} & \mbox{otherwise}, \end{array}\right.
\end{aligned}
\]
where $R$ is the positive root of
$R^2 + (2 - \lambda) R - 1 = 0$.
\end{lem}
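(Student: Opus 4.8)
The plan is to derive a single exact formula for the error and then extract both bounds from it. Put $T_n=S^n(x)$, so that $T_0=x$ and the definition of $S$ gives $T_n=\varepsilon_{n+1}/(a_{n+1}+T_{n+1})$. Letting the convergent matrix $M_n=\left(\begin{smallmatrix}p_{n-1}&p_n\\ q_{n-1}&q_n\end{smallmatrix}\right)=\left(\begin{smallmatrix}0&\varepsilon_1\\1&a_1\end{smallmatrix}\right)\cdots\left(\begin{smallmatrix}0&\varepsilon_n\\1&a_n\end{smallmatrix}\right)$ act as a Möbius transformation, we get $x=M_n(T_n)=\frac{p_{n-1}T_n+p_n}{q_{n-1}T_n+q_n}$, and subtracting $p_n/q_n$ (using $\det\left(\begin{smallmatrix}0&\varepsilon_j\\1&a_j\end{smallmatrix}\right)=-\varepsilon_j$) yields
\[
\left|x-\frac{p_n}{q_n}\right|=\frac{|T_n|}{q_n\,\bigl|q_n+q_{n-1}T_n\bigr|}.
\]
The key preliminary fact is that $v_n:=q_n+q_{n-1}T_n$ is positive for every $n$: one has $v_0=1$, and substituting $T_n=\varepsilon_{n+1}/(a_{n+1}+T_{n+1})$ together with $q_{n+1}=a_{n+1}q_n+\varepsilon_{n+1}q_{n-1}$ gives $v_{n+1}=(a_{n+1}+T_{n+1})\,v_n$ with $a_{n+1}+T_{n+1}\ge\lambda-\lambda/2>0$. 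The same computation recasts the error as
\[
\left|x-\frac{p_n}{q_n}\right|=\frac{1}{q_n\,(q_{n+1}+q_nT_{n+1})}=\frac{1}{q_n^2}\cdot\frac{1}{\,q_{n+1}/q_n+T_{n+1}\,}.
\]

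The lower bound is now immediate: since $T_{n+1}\in[-\lambda/2,\lambda/2)$ we have $T_{n+1}<\lambda/2\le1$, so $q_{n+1}+q_nT_{n+1}<q_{n+1}+q_n$, hence $|x-p_n/q_n|>1/(q_n(q_{n+1}+q_n))$. For the upper bound I would work with the first form, $|x-p_n/q_n|=\frac{|T_n|}{q_n^2\,(1+V_nT_n)}$ where $V_n:=q_{n-1}/q_n>0$. When $T_n\ge0$ this is $<|T_n|<\lambda/2$, which is below both asserted bounds; the substantive case is $T_n<0$, where the error equals $\frac{|T_n|}{q_n^2(1-V_n|T_n|)}$, at most $\frac{\lambda/2}{q_n^2(1-V_n\lambda/2)}$ by $|T_n|\le\lambda/2$. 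So it suffices to prove $V_n<1$ for $k$ even and $V_n\le R$ for $k$ odd (with $R$ the stated root, equivalently $1/R=R+2-\lambda$); a short algebraic check using $R^2+(2-\lambda)R-1=0$ then gives $\frac{\lambda/2}{1-V_n\lambda/2}\le\frac{1}{1-\lambda/2}$, respectively $\le\frac{1}{1/R-\lambda/2}$.

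Everything thus reduces to the estimate $q_{n-1}/q_n<1$ (resp. $\le R$) for $k$ even (resp. odd), and this is where the admissibility conditions for Rosen continued-fraction digit strings — equivalently, the explicit shape of the natural extension domain of the Rosen map from \cite{B-K-S}, \cite{Na2} — must be invoked. The only way $q_n$ can fail to dominate $q_{n-1}$ is through blocks of the minimal partial quotient, $(\varepsilon_n,a_n)=(-1,\lambda)$, and the admissible such blocks are controlled by the rotation number $\pi/k$ of the elliptic generator of the Hecke group and by the parity of $k$; a computation with $\left(\begin{smallmatrix}\lambda&-1\\1&0\end{smallmatrix}\right)$ (eigenvalues $e^{\pm i\pi/k}$) shows that along a maximal admissible run of $(-1,\lambda)$'s the ratio $q_{n-1}/q_n$ stays below $1$ when $k$ is even and attains exactly $R$ when $k$ is odd, and that these large values of $V_n$ cannot occur together with $|T_n|$ near $\lambda/2$ (indeed a minimal $a_n$ forces $T_n>0$). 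I expect this admissibility bookkeeping — identifying the allowed configurations of small digits and checking that the extremal one yields precisely $R$, with the clean even/odd dichotomy — to be the main obstacle; the exact error identity, the positivity of $v_n$, and the lower bound are routine.
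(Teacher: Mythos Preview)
Your derivation of the exact error identity and the lower bound coincide with the paper's proof; the extra verification that $v_n=q_n+q_{n-1}T_n>0$ is a nice point the paper leaves implicit. For the upper bound, however, you take an unnecessary detour. Having already written
\[
\left|x-\frac{p_n}{q_n}\right|=\frac{1}{q_n^2}\cdot\frac{1}{\,q_{n+1}/q_n+T_{n+1}\,},
\]
the paper simply bounds the denominator from below by $(q_{n+1}/q_n)-\lambda/2$, invoking $q_{n+1}/q_n>1$ (for $k$ even) respectively $q_{n+1}/q_n>1/R$ (for $k$ odd) together with $T_{n+1}\ge-\lambda/2$; this gives the stated upper bound in one line. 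You instead revert to the index-$n$ form $|T_n|/\bigl(q_n^2(1+V_nT_n)\bigr)$ with $V_n=q_{n-1}/q_n$, split on the sign of $T_n$, and are then forced into an algebraic verification that is not as short as you advertise: in the odd case your inequality $\frac{\lambda/2}{1-R\lambda/2}\le\frac{1}{1/R-\lambda/2}$ reduces (after using $1/R=R+2-\lambda$) to $R\lambda\le 1-\lambda+3\lambda^2/4$, equivalently $(\lambda-2)^3(3\lambda+2)\le 0$, which is true for $0<\lambda<2$ but is not quite a one-line check. The remark that large $V_n$ cannot coexist with $|T_n|$ near $\lambda/2$ is not needed. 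Both your route and the paper's rest on the same external ingredient --- the $q$-ratio bound, taken from \cite{B-K-S} --- so the substance is the same; staying with the $(n{+}1)$-formulation simply avoids the extra algebra.
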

\begin{proof}
We have
\[
T^n(x) \, = \,
\begin{pmatrix} p_{n-1} & p_{n} \\ q_{n-1} & q_{n} \end{pmatrix}^{-1} (x)
\]
and
\begin{eqnarray*}
x & = &
\begin{pmatrix} p_{n-1} & p_{n} \\ q_{n-1} & q_{n} \end{pmatrix} (T^{n} x) \\
\\
{} & = &
\frac{p_{n-1} T^{n}x \, + \, p_n}{q_{n-1} T^{n}x \, + \, q_n} .
\end{eqnarray*}
Thus we see
\begin{eqnarray*}
\left| x \, - \, \frac{p_n}{{q_n}} \right| & = &
\left| \frac{p_{n-1} T^{n}x \, + \, p_n}{q_{n-1} T^{n}x \, + \, q_n}\, - \,
                     \frac{p_n}{q_n} \right|  \\
{} & = & \left| \frac{T^n x}{q_n \left(q_{n-1} T^nx + q_n\right)} \right| \\
{} & = & \left| \frac{1}{q_n^2 \left( \frac{q_{n-1}}{q_n} +
\frac{1}{T^n x} \right) }\right|  \\
{} & = &
\left| \frac{1}{q_n^2 \left( \frac{q_{n-1}}{q_n} + \varepsilon_{n+1}(x)
         \left( r_{n+1} \lambda + T^{n+1} x \right) \right)} \right| \\
{} & = &
\frac{1}{q_n^2} \, \frac{1}{\frac{q_{n+1}}{q_n} +  T^{n+1}x }
\end{eqnarray*}
Since $\frac{q_{n+1}}{q_n} > 1$ if $k$ is even 
(and $\frac{q_{n+1}}{q_n} > \frac{1}{R}$ if $k$ is odd, respectively), 
the result follows.
\end{proof}
\begin{lem}
For a.e. $x \in {\mathbb I}$, 
\[
\lim_{n \to \infty} \frac{1}{n} \ln q_n 
\]
exists and is equal to the half of the entropy $h_k$ of the Rosen map w.r.t. 
the absolutely continuous invariant probability measure. 
\end{lem}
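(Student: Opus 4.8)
\medskip
\noindent\textbf{Proof proposal.} The plan is to express $\frac1n\ln q_n$, up to an error that tends to $0$, as a Birkhoff average for the Rosen map, and then to recognise that average through Rokhlin's entropy formula. First I would recall the ergodic theory of the Rosen map $T$ on $\mathbb I=[-\tfrac{\lambda}{2},\tfrac{\lambda}{2})$: it possesses an absolutely continuous invariant probability measure $\nu$, equivalent to Lebesgue measure, with respect to which $T$ is ergodic (see \cite{B-K-S}, \cite{Na2}). Since $|T'(x)|=1/x^{2}\ge 4/\lambda^{2}>1$ on $\mathbb I$, the map is uniformly expanding, its countable partition into intervals of monotonicity is a generating partition of finite entropy, and Rokhlin's formula is available:
\[
h_k\;=\;\int_{\mathbb I}\ln|T'(x)|\,d\nu(x)\;=\;-2\int_{\mathbb I}\ln|x|\,d\nu(x),
\]
a finite quantity; in particular $\ln|\,\cdot\,|\in L^{1}(\nu)$.

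Next I would use the exact identity already derived in the proof of the previous lemma, namely $x=M_n(T^{n}x)$ with $M_n=\begin{pmatrix}p_{n-1}&p_n\\ q_{n-1}&q_n\end{pmatrix}$ and $|\det M_n|=1$. Differentiating it and using $|M_n'(w)|=(q_{n-1}w+q_n)^{-2}$ yields $|(T^{n})'(x)|=(q_{n-1}T^{n}x+q_n)^{2}$, whereas the chain rule gives $|(T^{n})'(x)|=\prod_{j=0}^{n-1}|T'(T^{j}x)|=\prod_{j=0}^{n-1}(T^{j}x)^{-2}$. Taking logarithms, dividing by $2n$ and using $q_n>0$, one arrives at
\[
\frac1n\ln q_n\;=\;-\frac1n\sum_{j=0}^{n-1}\ln|T^{j}x|\;-\;\frac1n\ln\Bigl|1+\frac{q_{n-1}}{q_n}\,T^{n}x\Bigr|.
\]
By Birkhoff's ergodic theorem the first term on the right converges, for a.e.\ $x$, to $-\int_{\mathbb I}\ln|x|\,d\nu=\tfrac12 h_k$.

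It then remains to see that the last term is $O(1/n)$. By the previous lemma, $0<q_{n-1}/q_n<1$ when $k$ is even and $0<q_{n-1}/q_n<R<1$ when $k$ is odd, while $|T^{n}x|\le\lambda/2$; since $\lambda<2$ (and $R\lambda=R^{2}+2R-1<2$ in the odd case, as $0<R<1$), the quantity $1+\frac{q_{n-1}}{q_n}T^{n}x=\frac{q_n+q_{n-1}T^{n}x}{q_n}$ stays in a fixed compact subinterval of $(0,\infty)$, so its logarithm is bounded and the term tends to $0$. Hence $\frac1n\ln q_n\to\tfrac12 h_k$ for a.e.\ $x$, which is the assertion. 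The differentiation and the two-sided estimate on $1+\tfrac{q_{n-1}}{q_n}T^{n}x$ are routine; the one step that genuinely requires care --- the main obstacle --- is the ergodic input of the first paragraph, i.e.\ that the Rosen map carries an ergodic absolutely continuous invariant probability measure of finite entropy for which Rokhlin's formula holds. (The same computation also reduces the entropy to the integral $h_k=-2\int_{\mathbb I}\ln|x|\,d\nu$, which, upon inserting the known invariant density, yields the explicit entropy value mentioned in the abstract.)
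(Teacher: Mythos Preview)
Your argument is correct, but it follows a different path from the paper's. The paper proves the lemma via the Shannon--McMillan--Breiman theorem: for the ergodic system $(T,\mu)$ one has $h_k=-\lim_n\frac1n\ln\mu(\Delta_n)$ a.e., where $\Delta_n$ is the $n$-th cylinder containing $x$; since the invariant density is bounded above and away from $0$, $\mu$ may be replaced by Lebesgue measure, and Lemma~4 then shows that the Lebesgue measure of $\Delta_n$ is comparable to $q_n^{-2}$, whence $\frac1n\ln q_n\to h_k/2$.

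You instead invoke Rokhlin's formula $h_k=\int\ln|T'|\,d\nu=-2\int\ln|x|\,d\nu$ and combine the chain rule with Birkhoff's ergodic theorem applied to $\ln|\,\cdot\,|$, together with the uniform bound on $q_{n-1}/q_n$ (which indeed follows from the proof of Lemma~4) to control the remainder $\frac1n\ln\bigl(1+\tfrac{q_{n-1}}{q_n}T^nx\bigr)$. Both routes are standard for L\'evy--Khinchin type statements; the paper's avoids the (mild) justification that Rokhlin's formula applies to this countable-branch map, while yours has the bonus of producing the explicit integral representation $h_k=-2\int_{\mathbb I}\ln|x|\,d\nu$, which immediately yields the closed form of the entropy once the invariant density of \cite{B-K-S} is inserted. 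The integrability of $\ln|x|$ that Birkhoff requires is guaranteed, as you note, by the boundedness of the density.
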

\begin{proof} Let $\mu$ be the absolutely continuous invariant probability 
measure
for $T$. Since $(T, \mu)$ is ergodic (see \cite{B-K-S}), we have from 
Shannon-McMillan-Breiman's theorem that the entropy $h_k$ of the Rosen map 
is given by 
\[
h_k \, = \, 
\lim_{n \to \infty} 
\frac{1}{n} \ln \mu( \Delta[\varepsilon_{1} \, : \,  r_1\, ,
\,
\ldots \, , \, \varepsilon_{n} \, : \, r_n] )  \quad \text{a.e.} 
\]
We can replace $\mu$ to the normalized Lebesgue measure $m$ because 
$\mu$ has a positive density function bounded away from 0 and bounded from 
above, see \cite{B-K-S}, that is, 
\[
h_k \, = \, 
\lim_{n \to \infty} 
\frac{1}{n} \ln m( \Delta[\varepsilon_{1} \, : \,  r_1\, ,
\,
\ldots \, , \, \varepsilon_{n} \, : \, r_n] )  \quad \text{a.e.} 
\]
From Lemma 4, it turns out that 
\[
\lim_{n \to \infty} 
\frac{1}{n} \ln m( \Delta[\varepsilon_{1} \, : \,  r_1\, ,
\,
\ldots \, , \, \varepsilon_{n} \, : \, r_n] )  
\, = \, 2 \lim_{n \to \infty} \frac{1}{n} \ln q_n
\]
if the limit of the left hand side exists. Thus we have 
\[
\lim_{n \to \infty} \frac{1}{n} \ln q_n
\, = \, \frac{h_k}{2}
\]
for a.e. $x \in {\mathbb I}$. 
\end{proof}

Now we denote by ${\mathcal L}g_{k}$ the Legendre constant of Rosen continued 
fractions of index $k$, that is, the following hold : \\
(1) for $c \le {\mathcal L}g_{k}$, and $x \in [-\frac{\lambda_k}{2}, \, 
\frac{\lambda_k}{2})$, and $\begin{pmatrix} p & \cdot \\ q & \cdot 
\end{pmatrix} \in G_k$, $q \ne 0$, if $\left| x \, - \, \frac{p}{q} 
\right| < \frac{c}{q^2}$ holds, then $\frac{p}{q} = \frac{p_n}{q_n}$ 
for some $n \ge 0$, \\
(2) for $c > {\mathcal L}g_{k}$, there exist $x \in [-\frac{\lambda_k}{2}, \, 
\frac{\lambda_k}{2})$ and $\begin{pmatrix} p & \cdot \\ q & \cdot 
\end{pmatrix} \in G_k$, $q \ne 0$, such that 
$\left| x \, - \, \frac{p}{q} \right| < \frac{c}{q^2}$ and 
$\frac{p}{q} \ne \frac{p_n}{q_n}$ for any $n \ge 0$. \\
As mentioned in the introduction, the existence of ${\mathcal L}g_{k}$ was 
shown in \cite{R-S}.  

On the other hand, we denote by $Le_{k}$ the Lenstra constant of Rosen 
continued fractions of index $k$. This means that for almost every 
$x \in [-\frac{\lambda_k}{2}, \, \frac{\lambda_k}{2})$, 
\[
\lim_{N \to \infty} \frac{1}{N} 
\sharp\left\{ 
n \, : \, 1 \le n \le N, \, \Theta_n(x) < t \right\} \, = \, 
C_k \cdot t \quad \mbox{for any} \quad 0 < t \le Le_{k} ,
\]
where $C_k$ is an absolute constant, which is given in \cite{B-K-S}, and 
\[
    \Theta_{n} \, = \, q_{n}^2 \, |x \, - \, \frac{p_n}{q_n}| .
\]
We will prove the following : 
\begin{Th}
For any $k \, \ge \, 3$, we have ${\mathcal L}g_{k} \, = \, Le_{k}$. 
\end{Th}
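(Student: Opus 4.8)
The plan is to prove the two inequalities $Le_k \le \mathcal{L}g_k$ and $\mathcal{L}g_k \le Le_k$ separately. The first is the "easy" direction and essentially already follows from the machinery in \S 2: if $c < \mathcal{L}g_k$, then every solution of $|x - p/q| < c/q^2$ is a principal convergent, so by Theorem 1 (applied to $\Gamma = G_k$, using $t_0 = \frac{1}{2}$ since $\min|c(g)| = 1$ for $G_k$) the counting function for such solutions with $|c(g)| \le N$ grows like $(\text{const})\cdot c \cdot \ln N$. Combining this with Lemma 5, which identifies $\frac{1}{n}\ln q_n \to h_k/2$, one converts the count over $|c(g)| \le N$ into a count over the index $n \le $ (something proportional to $\ln N$), yielding linearity of the Lenstra distribution function on $(0, c)$ for every $c < \mathcal{L}g_k$; hence $Le_k \ge \mathcal{L}g_k$. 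This reproduces the assertion in \S 2 that the Lenstra constant is at least the Legendre constant.

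For the reverse inequality $\mathcal{L}g_k \le Le_k$ — the main point — I would argue by contraposition: suppose $c$ satisfies $\mathcal{L}g_k < c$, so there exists $x_0$ and a non-convergent $p/q$ with $|x_0 - p/q| < c/q^2$; I want to deduce that the Lenstra distribution function $t \mapsto \lim_N \frac{1}{N}\#\{n \le N : \Theta_n(x) < t\}$ fails to be linear at $t = c$ for a.e. $x$, i.e. $c > Le_k$. The idea is that a non-convergent good approximation is an "extra" solution that, when counted in Theorem 1 (which counts \emph{all} elements $g(\infty)$ of $G_k$, convergent or not, satisfying the Diophantine inequality), must be balanced by the slope in Theorem 1's formula $\frac{4k\lambda_k t}{(k-2)\pi^2}$ being \emph{strictly larger} than the slope $C_k$ coming purely from the convergents. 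More precisely: the total count of \emph{all} solutions grows linearly in $t$ with the universal slope from Theorem 1; the count of solutions \emph{that are convergents} has asymptotic density $C_k \cdot t$ on the range where the Lenstra law is linear; for $t$ beyond $\mathcal{L}g_k$ there is a positive-density set of non-convergent solutions (this is where one uses that a single bad pair $(x_0, p/q)$ propagates under the group action / shift to a set of positive measure of $x$), so the convergent-count slope must drop below the total slope, breaking linearity exactly at $\mathcal{L}g_k$.

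The technical heart, and the step I expect to be the main obstacle, is showing that the existence of \emph{one} non-convergent $c$-approximation forces a \emph{positive-density} (in $n$, for a.e. $x$) family of non-convergent $c$-approximations. The natural mechanism: if $p/q$ is a non-convergent solution for $x_0$, then $p/q$ has a finite Rosen expansion, and by the structure of $G_k$ the "defect" is local — there is a cylinder set of positive Lebesgue measure consisting of points $y$ for which a correspondingly shifted copy of $p/q$ (of the form $g_n^{-1}$ applied appropriately, with $g_n$ built from the first $n$ partial quotients of $y$) is a non-convergent solution at level $c$; then ergodicity of $(T, \mu)$ and Lemma 5 (to control the growth of $q_n$ and hence translate the count over $|c| \le N$ into a count over $n$) give a positive lower density of indices $n$ at which $\Theta_n$-type inequalities are satisfied by a non-convergent of comparable size. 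One must check that such a non-convergent has denominator comparable to $q_n$ (so it genuinely contributes to the discrepancy between the total count and the convergent count at scale $N$), which is where the upper bound in Lemma 4 on $|x - p_n/q_n|$ — equivalently a lower bound forcing $\Theta_n$ to be bounded below by a quantity involving $\lambda$, $R$ — is used to pin down the relevant window of $t$.

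Finally, I would package the computation: combining Theorem 1's slope $\frac{4k\lambda_k t}{(k-2)\pi^2}$ with Lemma 5's $\frac{1}{n}\ln q_n \to h_k/2$ gives $C_k = \frac{2k\lambda_k}{(k-2)\pi^2}\cdot h_k$ (up to the exact normalization), and matching this against the known value $C_k$ from \cite{B-K-S} yields the explicit entropy $h_k$ promised in the abstract — this is the "corollary of the proof." The equality $\mathcal{L}g_k = Le_k$ then follows: the Lenstra law is linear on $(0, \mathcal{L}g_k)$ by the easy direction, and fails to be linear immediately past $\mathcal{L}g_k$ by the hard direction, so the Lenstra constant is exactly $\mathcal{L}g_k$.
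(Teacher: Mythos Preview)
Your overall strategy matches the paper's exactly: Proposition~3 ($\mathcal{L}g_k \le Le_k$) via the Corollary of \S2 plus Lemma~5, and Proposition~4 ($Le_k \le \mathcal{L}g_k$) by contraposition, propagating one non-convergent approximation to a positive-density family by ergodicity and then comparing against the universal linear count.  (Your opening sentence swaps the two inequality labels, though each paragraph proves the direction it actually claims at its end.)

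One point needs sharpening, and it is precisely the ``technical heart'' you flag.  The paper's device for turning a single bad pair $(x,p/q)$ into a cylinder of positive measure is not an abstract shift/group-action argument but an explicit concatenation trick: write $p/q$ as a finite Rosen fraction $[\hat{\varepsilon}_1\!:\!\hat{a}_1,\ldots,\hat{\varepsilon}_n\!:\!\hat{a}_n]$, \emph{prepend} a single very large partial quotient $\hat{a}_0 = M_1\lambda_k$, and show that for \emph{any} admissible prefix $(\varepsilon'_1\!:\!b_1,\ldots,\varepsilon'_l\!:\!b_l)$ the concatenation $(\varepsilon'_1\!:\!b_1,\ldots,\varepsilon'_l\!:\!b_l,\hat{\varepsilon}_0\!:\!\hat{a}_0,\varepsilon_1\!:\!a_1,\ldots)$ is again a Rosen expansion.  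The large $\hat{a}_0$ is doing double duty: it guarantees admissibility of the concatenation regardless of what precedes it (this is condition~(ii) of the Claim following the proof), and it forces $q\cdot Q_{l+1}/Q_{l+n+1}\to 1$, which is exactly the comparability of the non-convergent denominator $Q$ with the ambient $q_n$ that you need in order to transport the inequality $|y-p/q|<t/q^2$ through the M\"obius map $U$ to $|y_0-P/Q|<t/Q^2$.  This last estimate is obtained by a direct computation with $Q_{l+1}=\hat{a}_0 Q_l+\hat{\varepsilon}_0 Q_{l-1}$ and the boundedness of $Q_{l-1}/Q_l$; Lemma~4 plays no role here --- it is used only inside the proof of Lemma~5.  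Without the large-coefficient insertion, neither admissibility of arbitrary concatenations nor the denominator comparison goes through, so your sketch as written has a gap at precisely the step you anticipated.
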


To prove this theorem, we will show the following two propositions. 
The assertion of Theorem 2 is a direct consequence of these two. 
\begin{prop}
For any $k \ge 3$, we have ${\mathcal Lg_k} \le {\mathcal Le_k}$. 
\end{prop}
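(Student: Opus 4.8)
The plan is to combine Theorem~1 (applied to $G_k$) with property~(1) of the Legendre constant and with Lemma~5. Fix $t$ with $0<t<{\mathcal L}g_k$ and restrict to the (still full-measure) set of $x\in[-\frac{\lambda_k}{2},\frac{\lambda_k}{2})$ for which both the conclusion of Theorem~1 for $G_k$ and that of Lemma~5 hold. Theorem~1 then gives
\[
\sharp\{g(\infty):\ |x-g(\infty)|<\tfrac{t}{c^2(g)},\ |c(g)|\le N,\ g\in G_k\}=\Bigl(\tfrac{4k\lambda_k}{(k-2)\pi^2}\,t+o(1)\Bigr)\ln N .
\]

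Next I would identify the left-hand count with a count over principal convergents. Every $\frac{p_n}{q_n}$ is a parabolic point of $G_k$, represented by $\left(\begin{smallmatrix}p_{n-1}&p_n\\ q_{n-1}&q_n\end{smallmatrix}\right)\in G_k$, whose lower-left entry is exactly $q_n$, and every $g$ with $g(\infty)=\frac{p_n}{q_n}$ has $|c(g)|=q_n$; conversely, since $t<{\mathcal L}g_k$, property~(1) says every parabolic point $\frac pq$ with $|x-\frac pq|<\frac{t}{q^2}$ equals some $\frac{p_n}{q_n}$. As distinct indices yield distinct convergents for all but finitely many $n$, the two sets coincide up to an error $O(1)$, so
\[
\sharp\{n\ge 0:\ \Theta_n(x)<t,\ q_n\le N\}=\Bigl(\tfrac{4k\lambda_k}{(k-2)\pi^2}\,t+o(1)\Bigr)\ln N .
\]

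Finally I would translate ``$q_n\le N$'' into ``$n\le M$''. By Lemma~5, $\frac1n\ln q_n\to\frac{h_k}{2}>0$, so for each $\varepsilon>0$ and all large $n$ one has $e^{n(h_k/2-\varepsilon)}\le q_n\le e^{n(h_k/2+\varepsilon)}$. Taking $N=e^{M(h_k/2+\varepsilon)}$ for the upper bound and $N=e^{M(h_k/2-\varepsilon)}$ for the lower bound, inserting these into the last display and letting $\varepsilon\to 0$, one obtains, for a.e.\ $x$ and every $0<t<{\mathcal L}g_k$,
\[
\lim_{M\to\infty}\frac1M\,\sharp\{n:\ 1\le n\le M,\ \Theta_n(x)<t\}=\frac{2k\lambda_k h_k}{(k-2)\pi^2}\,t .
\]
This limit is linear in $t$; comparing it on the overlap with the Lenstra constant (which is known to exist, the limit being $C_k t$ on $(0,Le_k]$) forces the common slope $C_k=\frac{2k\lambda_k h_k}{(k-2)\pi^2}$, hence the linear behaviour holds throughout $(0,{\mathcal L}g_k)$, and letting $t\uparrow{\mathcal L}g_k$ gives ${\mathcal L}g_k\le Le_k$. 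The relation $C_k=\frac{2k\lambda_k h_k}{(k-2)\pi^2}$ is, incidentally, exactly the corollary on the entropy $h_k$ of the Rosen map announced in the introduction.

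The step that needs the most care is the passage between the two normalizations of the counting function --- ``by denominator size $q_n\le N$'', which is what Theorem~1 delivers, and ``by index $n\le M$'', which the Lenstra limit requires; this is precisely where Lemma~5 enters, together with the (standard, but genuinely needed) fact that the principal convergents $\frac{p_n}{q_n}$ are pairwise distinct for large $n$, so that the parabolic-point count of Theorem~1 and the index count differ only by $O(1)$. The remaining ingredients are simply Theorem~1 for $G_k$ and property~(1) of ${\mathcal L}g_k$, which together show --- as anticipated in the introduction --- that this inequality is the ``not hard'' half of Theorem~2.
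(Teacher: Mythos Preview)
Your argument is correct and follows the same route as the paper: use the counting result of \S2 to get the asymptotic number of parabolic solutions, identify this (for $t<\mathcal{L}g_k$) with a count over principal convergents via property~(1) of the Legendre constant, and then use Lemma~5 to pass from the $\ln N$ normalization to the index normalization, obtaining linearity of the Lenstra-type limit on $(0,\mathcal{L}g_k]$.

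Two small points where the paper differs. First, the paper invokes the \emph{Corollary} of \S2 (valid for all $t>0$) rather than Theorem~1 itself (valid only for $t<t_0=\tfrac12$); your version therefore tacitly needs $\mathcal{L}g_k<\tfrac12$, which is true (cf.\ the remark in the introduction citing~[Le3]) but should be said. Second, in place of your $\varepsilon$-sandwich the paper simply evaluates the Corollary along the subsequence $N=q_N$ and rewrites the resulting ratio as $\bigl(\tfrac1N\sharp\{\cdots\}\bigr)\big/\bigl(\tfrac1N\ln q_N\bigr)$, so that Lemma~5 applies directly to the denominator; this is a cosmetic streamlining of the same idea. Your explicit mention that distinct indices give distinct convergents (hence the two counts agree up to $O(1)$) is a point the paper leaves implicit.
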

\noindent
{\bf Proof.}  
Suppose $0 \le t \le {\mathcal L}g_{k}$.  From Corollary of \S 2, we have 
for a.e. $x \in [-\frac{\lambda_k}{2}, \, \frac{\lambda_k}{2})$ 
\begin{eqnarray*}
{} & {} & \lim_{N \to \infty} 
\frac{\sharp \{ g(\infty) \, : \, |x - g(\infty)| < \frac{t}{{c^2}(g)},
                    \ |c(g)|  \le q_n,  \, g \in G_k, \, \mbox{for some}\, 
                     0 \le n \le N \}}
                    {\ln q_N}  \\
{}& = & 
\lim_{N \to \infty} 
\frac{\sharp \{ g(\infty) \, : \, |x - g(\infty)| < \frac{t}{{c^2}(g)},
                    \ |c(g)| \le q_N, \ g \in G_k \}}{\ln q_N}  \\
{} & = &
 \frac{4 \cdot k \cdot \lambda_k \cdot t}{(k-2) \cdot \pi^2 }
\end{eqnarray*}
We note the following 
\begin{eqnarray*}
{}&{}& \lim_{N \to \infty} 
\frac{\sharp \{ g(\infty) \, : \, |x - g(\infty)| < \frac{t}{{c^2}(g)},
                    \ |c(g)|  \le q_n,  \, g \in G_k, \, \mbox{for some}\, 
                     0 \le n \le N \}}
                    {\ln q_N} \\
& = & 
\lim_{N \to \infty} 
\frac{\frac{1}{N}\sharp \{ g(\infty) \, : \, |x - g(\infty)| < 
\frac{t}{{c^2}(g)},
                    \ |c(g)|  \le q_n,  \, g \in G_k, \, \mbox{for some}\, 
                     0 \le n \le N \}}
                    {\frac{1}{N}\ln q_N} 
\end{eqnarray*}
From Lemma 5, the denominator of the right hand side converges to 
$\frac{h_k}{2}$ (a.e.), 
we see that the numerator converges (a.e.) to 
\[
\frac{4 \cdot k \cdot \lambda_k \cdot t \cdot h_k}{2(k-2) \cdot \pi^2 }
\]
This means 
\begin{equation}
\lim_{N \to \infty} \frac{1}{N}
\sharp\{ n \, : \, 1 \le n \le N, \, \Theta < t\} 
\, = \, 
\frac{4 \cdot k \cdot \lambda_k \cdot t \cdot h_k}{2 (k-2) \cdot \pi^2 }
\qquad \mbox{a.e.} 
\end{equation}
for $0 \le t \le {\mathcal L}g_{k}$. \qed
\begin{prop}
For any $k \ge 3$, we have ${\mathcal Lg_k} \ge {\mathcal Le_k}$. 
\end{prop}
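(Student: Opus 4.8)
The plan is to prove the reverse inequality ${\mathcal Lg}_k \ge {\mathcal Le}_k$ by showing that the density law of large numbers for the $\Theta_n$ must \emph{fail} to be linear beyond the Legendre constant, so the Lenstra constant cannot exceed it. Concretely, suppose for contradiction that ${\mathcal Le}_k > {\mathcal Lg}_k$; then by definition of ${\mathcal Le}_k$ the limit
\[
\lim_{N \to \infty} \frac{1}{N}\sharp\{ n : 1 \le n \le N, \, \Theta_n(x) < t\} = C_k \cdot t
\]
holds for a.e. $x$ and all $0 < t \le {\mathcal Le}_k$, in particular for some $t_1$ with ${\mathcal Lg}_k < t_1 < {\mathcal Le}_k$. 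The strategy is to count, on the one hand, convergents $\tfrac{p_n}{q_n}$ with $\Theta_n < t_1$ (which by (1) already accounts for the full linear growth rate $\tfrac{4k\lambda_k t_1 h_k}{2(k-2)\pi^2}$ when $t_1 \le {\mathcal Lg}_k$, and by the Corollary of \S2 the \emph{total} count of $g(\infty)$ with $|x - g(\infty)| < t_1/c^2(g)$ and $|c(g)| \le q_N$ grows like $\tfrac{4k\lambda_k t_1}{(k-2)\pi^2}\ln q_N$), and on the other hand, the ``extra'' solutions $\tfrac{p}{q}$ with $|x - \tfrac{p}{q}| < t_1/q^2$ that are \emph{not} convergents, which exist for $t_1 > {\mathcal Lg}_k$.

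The key point is a quantitative refinement: for $t$ just above ${\mathcal Lg}_k$, not only does one non-convergent good approximation exist for some $x$, but in fact for a.e.\ $x$ infinitely many occur, and indeed with positive density relative to $\ln q_N$. To see this I would use the Corollary of \S2 together with the counting already done in Proposition 2: for $t \le {\mathcal Lg}_k$, the count of \emph{all} $g(\infty)$ with $|x-g(\infty)|<t/c^2(g)$, $|c(g)| \le q_N$, equals (asymptotically, divided by $\ln q_N$) the count of convergents with $\Theta_n < t$, because by the Legendre property every such $g(\infty)$ \emph{is} a convergent. For $t_1 > {\mathcal Lg}_k$, however, the left-hand count $\tfrac{4k\lambda_k t_1}{(k-2)\pi^2}\ln q_N$ strictly exceeds what the convergents alone can supply if the convergent count were still $C_k t_1 \cdot \tfrac{N}{1} \sim C_k t_1 \tfrac{2}{h_k}\ln q_N$ — wait, these match by design. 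So the contradiction must instead come from the \emph{other} direction: if $\Theta_n < t_1$ held with density $C_k t_1$ for a.e.\ $x$, I would produce a specific $x$ (not merely a.e., but an actual one, using that the a.e.\ statement gives a large measure set and the Legendre failure gives an open condition) violating the \emph{uniform} Legendre-type bound, by transferring the metric abundance of extra solutions into an individual counterexample.

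The cleanest route, which I would actually pursue, is this. Because ${\mathcal Lg}_k < t_1$, there is $x_0$ and a non-convergent $\tfrac{p}{q}$ with $|x_0 - \tfrac{p}{q}| < t_1/q^2$. Using the group $G_k$ to act on the geodesic picture from \S2 — applying a suitable $h \in G_k$ to move $\tfrac{p}{q}$ toward $\infty$ — this single failure propagates: the $G_k$-orbit structure shows that along the orbit of $x_0$ there are infinitely many indices where a non-convergent beats $t_1/q^2$, and the ergodic/equidistribution statement of the Corollary forces this to happen with the full asymptotic density for a.e.\ $x$. Then the count of $n \le N$ with $\Theta_n < t_1$, being at most the count of convergents, is \emph{strictly less} than $\tfrac{4k\lambda_k t_1}{(k-2)\pi^2} \cdot \tfrac{2}{h_k} \ln q_N / \ln q_N \cdot N = C_k t_1 N$ — because a positive-density fraction of the good approximations at level $t_1$ are non-convergents and hence do \emph{not} contribute to $\sharp\{n : \Theta_n < t_1\}$ — which contradicts $\lim \tfrac1N \sharp\{n : \Theta_n < t_1\} = C_k t_1$. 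Hence no such $t_1$ exists and ${\mathcal Lg}_k \ge {\mathcal Le}_k$.

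The main obstacle I anticipate is making rigorous the claim that the Legendre \emph{failure} at level $t_1$ — which a priori is just the existence of \emph{one} pair $(x_0, \tfrac{p}{q})$ — upgrades to a \emph{positive-density} phenomenon for a.e.\ $x$. This requires either (a) showing the set of $x$ admitting a non-convergent $t_1$-approximation with denominator in a dyadic block $[q_n, q_{n+1})$ has a uniformly positive lower density of blocks, via the renewal/ergodic structure of the natural extension of $T$ and the cylinder geometry of Lemma 4, or (b) identifying $C_k$ explicitly (as $\tfrac{4k\lambda_k h_k}{2(k-2)\pi^2}$, matching equation (1)) and observing that the \S2 Corollary already counts \emph{all} $t_1$-solutions, convergents or not, so that if convergents alone achieved density $C_k t_1$ then the non-convergent solutions would push the total above $C_k t_1$, contradicting the Corollary's exact asymptotic $\tfrac{4k\lambda_k t_1}{(k-2)\pi^2}\ln q_N = C_k t_1 \ln q_N$. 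Route (b) is much cleaner and is, I expect, the intended argument: the \S2 Corollary pins the total count, equation (1) of Proposition 2 pins the convergent count for $t \le {\mathcal Lg}_k$ and by continuity/monotonicity for $t$ slightly above it the convergent count cannot already saturate the total, so any $t_1 > {\mathcal Lg}_k$ with a genuine non-convergent solution of positive density breaks linearity of the Lenstra density — forcing $Le_k \le {\mathcal Lg}_k$.
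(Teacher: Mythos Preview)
Your overall strategy matches the paper's: for $t > {\mathcal Lg}_k$, show that non-convergent $t$-approximations occur with positive density for a.e.\ $x$, so that the convergent count is strictly smaller than the total count given by the Corollary of \S2, whence $t > {\mathcal Le}_k$. You also correctly isolate the crux --- upgrading a \emph{single} Legendre failure $(x_0, p/q)$ to a positive-density statement for a.e.\ $x$ --- but you do not actually carry it out. Your preferred route (b) is circular: the implication ``if convergents alone achieved density $C_k t_1$ then non-convergent solutions would push the total above $C_k t_1$'' already presupposes that the non-convergent solutions have positive density, which is precisely the missing step. Route (a) points in the right direction but is left as a wish; invoking ``the $G_k$-orbit structure'' or ``renewal/ergodic structure of the natural extension'' does not by itself produce the required set of positive measure.

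The paper fills this gap with an explicit continued-fraction construction. From the single failure at $x_0$, it extracts a finite digit block $(\varepsilon_1,a_1),\ldots,(\varepsilon_{M_0},a_{M_0})$ such that every $y$ in that cylinder inherits the non-convergent $t$-approximation $p/q$. It then prepends a single large digit $\hat a_0 = M_1\lambda_k$: because $M_1$ is large, the M\"obius map $U$ determined by any admissible prefix $(\varepsilon'_1,b_1),\ldots,(\varepsilon'_l,b_l)$ followed by $(\hat\varepsilon_0,\hat a_0)$ distorts distances so mildly (the estimates on $Q_l y/Q_{l+1}$, $Q_l(p/q)/Q_{l+1}$, and $qQ_{l+1}/Q_{l+n+1}$ are controlled uniformly in the prefix) that the transported pair still satisfies $|y_0 - P/Q| < t/Q^2$ with $P/Q$ not a convergent of $y_0$. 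Ergodicity of the Rosen map with respect to $\mu_k$ then gives, for a.e.\ $w$, a positive asymptotic frequency of visits to this fixed cylinder, and each visit yields a non-convergent $t$-solution. This concrete ``large-digit insulation plus ergodicity'' mechanism is exactly what your proposal lacks, and without it the argument does not close.
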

\noindent
{\bf Proof.} Suppose that $t > {\mathcal Lg_k}$.  Then there exist $x \in
[-\frac{\lambda_k}{2}, \, \frac{\lambda_k}{2})$ and 
$\begin{pmatrix} p & \cdot \\ q & \cdot \end{pmatrix} \in G_k$ 
such that 
\begin{equation}
\left| x \, - \, \frac{p}{q} \right| \, < \, \frac{t}{q^2} 
\qquad \mbox{and} \qquad \frac{p}{q} \ne \frac{p_n}{q_n} \,\,
\mbox{for any} \, \, n \ge 0.
\end{equation}
From this inequality, there exists $\varepsilon > 0$ such that 
\[
\left|x \, - \, \frac{p}{q}\right| \, < \, 
\frac{t}{q^2} - \varepsilon .
\]
If $y \in [-\frac{\lambda_k}{2}, \, \frac{\lambda_k}{2})$ is sufficiently 
close to $x$, i.e. 
\begin{equation}
|x - y| < \frac{\varepsilon}{2} ,
\end{equation}
then 
\begin{equation}
|y - \frac{p}{q}| < \frac{t}{q^2} \, - \, \frac{\varepsilon}{2} ,
\end{equation}
holds. Moreover there exists a positive integer $M_0$ such that 
\[
(\varepsilon_i(x), a_i(x)) \, = \, (\varepsilon_i(y), a_i(y)) \qquad 
 \mbox{for} \qquad 1 \le i \le M_0
\]
implies that (2) holds. 

Now we look at the expansion of $\frac{p}{q}$ and $x$.  Since $\frac{p}{q}$ 
is a parabolic point of $G_k$, it is easy to see that $\frac{p}{q}$ has a 
finite Rosen expansion, say, 
\[
\frac{p}{q} \, = \, 
\confrac{\hat{\varepsilon}_1}{\hat{a}_1} \, + \, 
            \confrac{\hat{\varepsilon}_2}{\hat{a}_2} \, + \, 
       \cdots \, + \, \confrac{\hat{\varepsilon}_n}{\hat{a}_n}.
\]
This means there exists $m$, $1 \le m \le n$ such that 
\[
(\varepsilon_i, a_i) \, = \, (\hat{\varepsilon}_1, \hat{a}_i)
\quad \mbox{for} \,\, 1 \le i \le m-1 \quad \mbox{and} \quad
(\varepsilon_m, a_m) \, \ne \, (\hat{\varepsilon}_m, \hat{a}_m)
\]
with 
\[
x \, = \,   \confrac{\varepsilon_1}{a_1} \, + \, 
            \confrac{\varepsilon_2}{a_2} \, + \, 
            \confrac{\varepsilon_3}{a_3} \, + \,  \cdots
            \confrac{\varepsilon_m}{a_m} \, + \, 
            \confrac{\varepsilon_{m+1}}{a_{m+1}} \, + \, \cdots 
            \, + \, 
            \confrac{\varepsilon_n}{a_n} \, + \,  \cdots
\]
Here we may assume that $M_0 > n$.  
We choose a positive integer $M_1$ sufficiently large and define 
\[
\hat{a}_0 \, = \, M_1 \cdot \lambda_{k} 
\]
We see that 
\[
\confrac{\hat{\varepsilon}_0}{\hat{a}_0} \, + \, 
\confrac{\hat{\varepsilon}_1}{\hat{a}_1} \, + \, 
            \confrac{\hat{\varepsilon}_2}{\hat{a}_2} \, + \, 
       \cdots \, + \, \confrac{\hat{\varepsilon}_n}{\hat{a}_n}
        \, + \, \confrac{\hat{\varepsilon}_{n}}{\hat{a}_{n}}  
\]
is a Rosen continued fraction, where $ \hat{\varepsilon}_0 = +1$. 
We fix any finite Rosen continued fraction 
\[
\confrac{\varepsilon'_1}{b_1} \, + \, 
            \confrac{\varepsilon'_2}{b_2} \, + \, \cdots \, + \, 
            \confrac{\varepsilon'_l}{b_l}  
\]
so that 
\[
\confrac{\varepsilon'_1}{b_1} \, + \, 
            \confrac{\varepsilon'_2}{b_2} \, + \, \cdots \, + \, 
            \confrac{\varepsilon'_l}{b_l} 
            \, + \, 
\confrac{\hat{\varepsilon}_0}{\hat{a}_0} \, + \, 
\confrac{\hat{\varepsilon}_1}{\hat{a}_1} \, + \, 
            \confrac{\hat{\varepsilon}_2}{\hat{a}_2} \, + \, 
       \cdots \, + \, \confrac{\hat{\varepsilon}_n}{\hat{a}_n}
        \, + \, \confrac{\hat{\varepsilon}_{n}}{\hat{a}_{n}}  
\]
and 
\[
\confrac{\varepsilon'_1}{b_1} \, + \, 
            \confrac{\varepsilon'_2}{b_2} \, + \, \cdots \, + \, 
            \confrac{\varepsilon'_l}{b_l} 
            \, + \, 
\confrac{\hat{\varepsilon}_0}{\hat{a}_0} \, + \, 
\confrac{\varepsilon_1}{a_1} \, + \, 
            \cdots
            \confrac{\varepsilon_m}{a_m} \, + \, 
            \confrac{\varepsilon_{m+1}}{a_{m+1}} \, + \, \cdots 
            \, + \, 
            \confrac{\varepsilon_n}{a_n} \, + \, \cdots \,+ \, 
            \confrac{\varepsilon_{M_0}}{a_{M_0}} \, + \, \cdots
\]
are also Rosen continued fractions.  Suppose that 
$y_0 \in [-\frac{\lambda_k}{2}, \, \frac{\lambda_k}{2})$ with the Rosen 
expansion of the form 
\[
\confrac{\varepsilon'_1}{b_1} \, + \, 
            \confrac{\varepsilon'_2}{b_2} \, + \, \cdots \, + \, 
            \confrac{\varepsilon'_l}{b_l} 
            \, + \, 
\confrac{\hat{\varepsilon}_0}{\hat{a}_0} \, + \, 
\confrac{\varepsilon_1}{a_1} \, + \, 
            \cdots
            \confrac{\varepsilon_m}{a_m} \, + \, 
            \confrac{\varepsilon_{m+1}}{a_{m+1}} \, + \, \cdots 
            \, + \, 
            \confrac{\varepsilon_n}{a_n} \, + \, \cdots \,+ \, 
            \confrac{\varepsilon_{M_0}}{a_{M_0}} \, + \, \mbox{(free)}
\]
and 
\[
\frac{P}{Q} \, = \, 
\confrac{\varepsilon'_1}{b_1} \, + \, 
            \confrac{\varepsilon'_2}{b_2} \, + \, \cdots \, + \, 
            \confrac{\varepsilon'_l}{b_l} 
            \, + \, 
\confrac{\hat{\varepsilon}_0}{\hat{a}_0} \, + \, 
\confrac{\hat{\varepsilon}_1}{\hat{a}_1} \, + \, 
            \confrac{\hat{\varepsilon}_2}{\hat{a}_2} \, + \, 
       \cdots \, + \, \confrac{\hat{\varepsilon}_n}{\hat{a}_n}
        \, + \, \confrac{\hat{\varepsilon}_{n}}{\hat{a}_{n}} . 
\]
We note that $P$ and $Q$ are uniquely determined by 
$\begin{pmatrix} P & \cdot \\ Q & \cdot 
\end{pmatrix} \in G_k$ and $Q > 0$.  
We also note that $y := S^{l+1}(y_0)$ has Rosen continued fraction 
\[
\confrac{\varepsilon_1}{a_1} \, + \, 
            \cdots
            \confrac{\varepsilon_m}{a_m} \, + \, 
            \confrac{\varepsilon_{m+1}}{a_{m+1}} \, + \, \cdots 
            \, + \, 
            \confrac{\varepsilon_n}{a_n} \, + \, \cdots \,+ \, 
            \confrac{\varepsilon_{M_0}}{a_{M_0}} \, + \, \mbox{(free)}
\]
which satisfies (2).  
We put 
\[
\begin{pmatrix} P_{n+l} & P_{n+l+1} \\ Q_{n+l} & Q_{n+l+1} 
\end{pmatrix} 
\, = \, 
\begin{pmatrix} 0 & \varepsilon'_1 \\ 1 & b_1 \end{pmatrix} 
\cdots 
\begin{pmatrix} 0 & \varepsilon'_l \\ 1 & b_l \end{pmatrix} 
\begin{pmatrix} 0 & \hat{\varepsilon}_0 \\ 1 & \hat{a}_0 \end{pmatrix} 
\begin{pmatrix} 0 & \hat{\varepsilon}_1 \\ 1 & \hat{a}_1 \end{pmatrix} 
\cdots
\begin{pmatrix} 0 & \hat{\varepsilon}_n \\ 1 & \hat{a}_n \end{pmatrix} , 
\]
which implies $(P,Q) = (P_{n+l+1}, Q_{n+l+1})$, and estimate  
\[
\left| y_0 \, - \, \frac{ P_{n+l+1}}{Q_{n+l+1}} \right|
\]
We also define 
\[
\begin{pmatrix} P_{l-1} & P_{l} \\ Q_{l-1} & Q_{1} 
\end{pmatrix} 
\, = \, 
\begin{pmatrix} 0 & \varepsilon'_1 \\ 1 & b_1 \end{pmatrix} 
\cdots 
\begin{pmatrix} 0 & \varepsilon'_l \\ 1 & b_l \end{pmatrix} 
\]
and 
\[
\begin{pmatrix} P_{l} & P_{l+1} \\ Q_{l} & Q_{1+1} 
\end{pmatrix} 
\, = \, 
\begin{pmatrix} 0 & \varepsilon'_1 \\ 1 & b_1 \end{pmatrix} 
\cdots 
\begin{pmatrix} 0 & \varepsilon'_l \\ 1 & b_l \end{pmatrix} 
\begin{pmatrix} 0 & \hat{\varepsilon}_0 \\ 1 & \hat{a}_0 \end{pmatrix} 
\]
We denote by $U$ the linear fractional transformation defined by 
$\begin{pmatrix} P_{l} & P_{l+1} \\ Q_{l} & Q_{l+1} \end{pmatrix}$. 
Then it is easy to see that 
\[
U\left(\frac{p}{q}\right) \, = \, \frac{P}{Q} \quad \mbox{and} \quad 
U(y) \, = \, y_0
\]
Thus 
\[
\left| y_0 \, - \, \frac{ P_{n+l+1}}{Q_{n+l+1}} \right| \, = \, 
\left| U\left(\frac{p}{q}\right) \, - \, U(y) \right|
\]
and the following holds: 
\begin{eqnarray*}
{} & {} & 
\left| U\left(\frac{p}{q}\right) \, - \, U(y) \right| \\
{} & = & 
\left| 
\frac{P_{l} y + P_{l+1}}{Q_{l} y + Q_{l+1} } \, - \, 
\frac{P_{l} \frac{p}{q} + P_{l+1}}{Q_{l}\frac{p}{q}  + Q_{l+1} }
\right|  \\
{} & = & 
\left| 
\left(
\frac{P_{l} y + P_{l+1}}{Q_{l} y + Q_{l+1} } \, - \, \frac{P_{l+1}}{Q_{l+1}}
\right)
\, + \, 
\left(
\frac{P_{l+1}}{Q_{l+1}} \, - \, 
\frac{P_{l} \frac{p}{q} + P_{l+1}}{Q_{l}\frac{p}{q}  + Q_{l+1} }
\right)
\right|  \\
{} & = & 
\left| 
\frac{y}{Q_{l+1} (Q_l y + Q_{l+1})} \, + \, 
\frac{\frac{p}{q}}{Q_{l+1} (Q_l \frac{p}{q} + Q_{l+1})}
\right| \\ 
{} & \le & 
\frac{\left| y \, - \, \frac{p}{q} \right|}{|Q_{l+1} (Q_l y + Q_{l+1})|} 
\, + \, 
\left| 
\frac{\frac{p}{q}}{Q_{l+1} (Q_l \frac{p}{q} + Q_{l+1})} \, - \, 
\frac{\frac{p}{q}}{Q_{l+1} (Q_{l} y + Q_{l+1}) } 
\right| \\
{} & \le & 
\frac{1}{|Q_{l+1} (Q_l y + Q_{l+1})|}  \left(\frac{t}{q^2} \, - \, 
                                                  \frac{\varepsilon}{2} \right)
\, + \, \left|\frac{p}{q}\right|
\left|\frac{1}{Q_{l+1} (Q_l \frac{p}{q} + Q_{l+1})} \, - \, 
\frac{1}{Q_{l+1} (Q_{l} y + Q_{l+1}) } 
\right| . 
\end{eqnarray*}
From the definition of $Q_{n+l+1}$, we see 
\[
q_{l+n+1} \, = \, p \cdot Q_{l} \, + q \cdot Q_{l+1} . 
\]
Since $|y| < 1$, $\left|\frac{p}{q}\right|$ cannot be large, 
$Q_{l+1} \, = \, \hat{a}_0 Q_{l} \, + \, \hat{\varepsilon}_{0} Q_{l-1}$, 
and $\frac{Q_{l-1}}{Q_l}$ is bounded (see \cite{B-K-S}), we see that 
\[
\left| \frac{Q_{l}y}{Q_{l+1}} \right| \quad \mbox{and} \quad 
\left| \frac{Q_{l} \frac{p}{q}}{Q_{l+1}} \right| 
\]
can be arbitrarily small and 
\[
\frac{q \cdot Q_{l+1}}{Q_{l+n+1}}
\]
can be sufficiently close to $1$ when we choose $M_1$ sufficiently large 
(note that $\hat{a}_0 = M_1 \lambda_{k}$). 

In the above discussion, the choice of $M_1$ can be independent of 
$(\varepsilon'_1, b_1), \, (\varepsilon'_2, b_2), \, \ldots \, 
(\varepsilon'_l, b_l)$. Thus we get 
\[
\left| y_0 \, - \, \frac{P}{Q} \right| \, < \, \frac{t}{Q^2}
\]
It is obvious from the construction that 
\[
\frac{P}{Q} \, \ne \, \frac{p_u}{q_u}
\]
for any $u \ge 0$.  Now we pick up a ``generic point" $w \in 
[-\frac{\lambda_k}{2}, \, \frac{\lambda_k}{2})$.  Then the ergodicity 
of $S$ w.r.t. $\mu_{k}$, we have 
\begin{eqnarray*}
{} & {} & \lim_{N \to \infty} \frac{1}{N}
\sharp \{ 1 \le u \le N \, : \, ((\varepsilon_u(w), a_u(w)) = 
(\hat{\varepsilon}_{0}, \hat{a}_{0}), \, 
(\varepsilon_{u+1}(w), a_{u+1}(w)) = \\
{} & {} & \qquad \qquad \qquad
(\varepsilon_{1}, a_{1}), \, \ldots \, , \, 
(\varepsilon_{u+n}(w), a_{u+n}(w)) = 
(\varepsilon_{n}, a_{n}), \, \ldots \, , \, \\
{} & {} & \qquad \qquad \qquad \qquad 
(\varepsilon_{u+M_0}(w), a_{u+M_0}(w)) = 
(\varepsilon_{M_0}, a_{M_0})
\}  \\
{} & = & 
\mu_{k} \left(
\{ w : \, (\varepsilon_1(w), a_1(w)) = 
(\hat{\varepsilon}_{0}, \hat{a}_{0}), 
(\varepsilon_{2}(w), a_{2}(w)) =  \right. \\
{} & {} & \qquad \qquad \qquad \left.
(\varepsilon_{1}, a_{1}), \, \ldots \, , \, 
(\varepsilon_{M_0+1}(w), a_{M_0+1}(w)) = 
(\varepsilon_{M_0}, a_{M_0}) \}
\right) \\
{} & > & 0 .
\end{eqnarray*}
Finally we look at  
\begin{eqnarray*}
{} & {} & 
\frac{1}{\ln Q} \sharp 
\{1 \le q \le Q \, : \, \left|w \, - \, \frac{p}{q} \right| \, < \, 
\frac{t}{q^2} \} \\
{} & = & 
\frac{1}{\ln Q} \sharp 
\{1 \le q \le Q \, : \, \left|w \, - \, \frac{p}{q} \right| \, < \, 
\frac{t}{q^2}, \, \frac{p}{q} = \frac{p_n}{q_n} \, \mbox{for some} \, 
n\ge 1 \} \\{} & {} & \qquad + \, 
\frac{1}{\ln Q} \sharp 
\{1 \le q \le Q \, : \, \left|w \, - \, \frac{p}{q} \right| \, < \, 
\frac{t}{q^2}, \, \frac{p}{q} \ne \frac{p_n}{q_n} \, \mbox{for some} 
\, n\ge 1 \} .
\end{eqnarray*}
From the above discussion, the second term has a positive ``liminf" and 
then the first term can not converge to $\frac{4k \lambda_{k} t}{(k-2)\pi^2}$. 
Since this estimate holds for a.e. $w$, $t$ is larger than $Le_{k}$.  
\qed

Consequently we have shown the assertion of Theorem 2. The method of the proof 
in the above shows the following generalization: \\[0.5cm]
\underline{\bf Claim} 
Suppose that $T$ is a map of an interval onto itself that induces continued 
fraction expansions for real numbers in the domain interval.  Moreover we 
assume \\
(i)  $T$ has an absolutely invariant probability measure.  \\
(ii) There exists a real number $M>0$ such that for any possible coefficient 
value $c$ larger than $M$ (or $|c| > M$), one can concatenate any 
admissible sequence after $c$ as an admissible sequence of continued 
fractions arising from $T$. \\
(iii) The Legendre constant of $T$ exists. \\
(iv)  $t_0$ in Theorem 1 is larger than the Legendre constant. \\
Then the Lenstra constant exists and the Legendre and the Lenstra constants 
are equal. \\[0.5cm]

From (1) in the proof of Proposition 3 together with Corollary 4.1 of 
\cite{B-K-S}, we have the explicit value of the 
entropy of the Rosen map. 
\\[0.5cm]
\underline{\bf Corollary} {\it 
The entropy of the Rosen map w.r.t. the absolutely 
continuous invariant probability measure is equal to 
\[
 \dfrac{C \cdot (k-2)\pi^2}{2k}
\]
with
\[
C \, = \, \left\{ \begin{array}{cl}
               \dfrac{1}{\ln \{(1 + \cos \frac{\pi}{k})/
                                      \sin \frac{\pi}{k} \}} &
                                      \text{if $k$ is even,} \\
            \\
               \dfrac{1}{\ln(1 + R)} & \text{if $k$ is odd.}
               \end{array} \right.
\]  
}
\\
{\it Acknowledgements.}  The author is grateful to Thomas A. Schmidt for 
his careful reading of the first draft of this paper, helpful advice, 
and useful comments.

\end{document}